\newtheorem{theorem}{Theorem}[section]
\newtheorem{lemma}[theorem]{Lemma}
\newtheorem{claim}[theorem]{Claim}
\newtheorem{corollary}[theorem]{Corollary}
\newtheorem{definition}[theorem]{Definition}
\newcommand{\N}{\mathbb{N}}
\newcommand{\rr}{\mathds{R}}
\newcommand{\dd}[3]{{{}_{#2}{#1}_{#3}}}
\newcommand{\al}{\alpha}
\newcommand{\be}{\beta}
\newcommand{\ga}{\gamma}
\newcommand{\si}{\sigma}
\DeclareMathOperator{\conv}{conv}
\DeclareMathOperator{\aff}{aff}
\begin{document}

\title{Universality of vector sequences and universality of Tverberg partitions}

\author{Attila P\'or}


\maketitle

\begin{abstract} A result of Rosenthal says that for every $q>1$ and $n \in \N$ there is $N \in \N$ such that every sequence of $N$ distinct positive numbers contains, after a suitable translation and possible multiplication by $-1$, a subsequence $a_1,\ldots,a_n$ that is either $q$-increasing (that is, $a_{i+1}>qa_i$ for all $i$) or $1/q$-decreasing ($a_{i+1}<a_i/q$ for all $i$). One of our main theorems extends this result to vector sequences.  This theorem is then used to prove the universality theorem for Tverberg partitions which says that, for every $d$ and $r$, every long enough sequence of points in $\rr^d$ in general position contains a subsequence of length $n$ whose Tverberg partitions are exactly the so called rainbow partitions. 
\end{abstract}

\section{Introduction and main results}\label{section-introduction}

This paper is about sequences of vectors in $\rr^d$ and their universal properties. A property $P$ is called {\sl universal} if for every $n \in \N$ there is $N\in \N$ such that every vector sequence $a_1,\ldots,a_N$ (where the $a_i$s are in general position in $\rr^d$) contains a subsequence of length $n$ that has property $P$. For instance, when $d=1$ the property of being increasing or decreasing is universal according to a theorem of Erd\H os and Szekeres~\cite{ESz} from 1935. Precisely, their result says that any sequence of $n^2+1$ distinct real numbers contains a subsequence of length $n$ that is either increasing or decreasing. Rosenthal's lemma~\cite{Ros} described in the abstract is another universality theorem which extends that of Erd\H os and Szekeres. Another theorem of  Erd\H os and Szekeres from the same paper states that every sequence of $4^n$ 2-dimensional vectors (in general position) contains a subsequence of length $n$ that are in convex position, that is, their convex hull has $n$ vertices. This is the universality of the property ``being in convex position''. The main results in this paper establish further universal properties of vector sequences. To state them some definitions are needed.

We define $a:[n] \rightarrow \rr$ as a sequence of length $n$ where $[n]=\{1,\ldots,n\}$. A $d$-dimensional sequence is a collection of $d$ sequences, that is
$a:[n] \rightarrow \rr^d$. The elements of this $d$-dimensional sequence are the column vectors $a_i =  (\dd{a}{1}{i}, \dots, \dd{a}{d}{i})^T$.
For each $s \in [d]$, $\dd{a}{s}{} = \dd{a}{s}{1}, \dd{a}{s}{2}, \dots$ is its $s$th coordinate sequence. The $d$-dimensional sequence $a$ is {in general position} if any $d$ elements
are linearly independent.

Let $a,a'$ be two ($d$-dimensional) sequences of length $n$ and $n'$ respectively. We say that $a'$ is a subsequence of $a$ if
$n' \leq n$ and there exists a subset $I= \{i_1, \dots, i_{n'} \}$ of $[n]$ such that $i_1 < \dots< i_{n'}$ and
$a'_j = a_{i_j}$.

Throughout the paper we (try to) use the variables consistently, namely $i, j,k,\ell$ for the elements in $[n]$ and $[N]$ and $n,N$ for the length of the sequence, and $s,t,\si,\tau \in [d]$ for the coordinates and $d$ for dimension.

Let $a$ be a $d$-dimensional sequence and $T$ a $d \times d$ invertible matrix. We say that the sequence $Ta : Ta_1, \dots, Ta_n$ is a {\sl linear transformation} of the sequence $a$. Of course,  a coordinate sequence of $Ta$ is a linear combination of the coordinate sequences of $a$.

\begin{definition}
Let $q>1$ be a real number. The sequence $a$ is $q$-increasing if it is positive and for every $i \in [n-1]$ we have $\frac{a_{i+1}}{a_i} > q$.
\end{definition}

\begin{definition}
Let $d>1$ and let $a$ be a $d$-dimensional sequence. We say that $a$ is $q$-pseudo-geometric if every coordinate sequence is positive and
for every $s,t \in [d]$, $s\ne t$ either of the two sequences $\frac{\dd{a}{t}{}}{\dd{a}{s}{}}$ or
$\frac{\dd{a}{s}{}}{\dd{a}{t}{}}$ is $q$-increasing.
\end{definition}

One of our main results says that being $q$-pseudo-geometric is a universal property of vector sequences. Here comes the precise statement.
	
\begin{theorem}\label{th:univ-sequen}
Let $q>1$ be a real number and let $d>1$ be an integer. For every integer $n$ there exists $N = N(d,n,q)$ with the following property.
If $a$ is a $d$-dimensional sequence of length $N$ in general position, then there exists a $d \times d$ invertible matrix $T$
such that $Ta$ has a $q$-pseudo-geometric subsequence of length $n$.
Further more, we can assume that $T$ is a lower triangular matrix.
\end{theorem}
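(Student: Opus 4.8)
The plan is to reduce the statement to Rosenthal's lemma via the observation that translating a \emph{ratio} of two coordinate sequences, $\dd{a}{s}{}/\dd{a}{t}{}\mapsto \dd{a}{s}{}/\dd{a}{t}{}-c$, is exactly the \emph{linear} operation $\dd{a}{s}{}\mapsto \dd{a}{s}{}-c\,\dd{a}{t}{}$ on the numerator — a triangular operation when $t<s$. Call a tuple of positive sequences a \emph{chain} if they can be listed $v_1,\dots,v_k$ so that each consecutive ratio $v_{j+1}/v_j$ is $q$-increasing. If $b=Ta$ has all coordinate sequences positive and they form a chain, then $b$ is $q$-pseudo-geometric: the ratio of any one coordinate to a smaller one is a product of consecutive $q$-increasing ratios, and a product of positive $q$-increasing sequences is again $q$-increasing (consecutive quotients multiply, so they exceed $q$). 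Since a subsequence of a positive $q$-increasing sequence is still $q$-increasing, chains survive passage to subsequences. Hence it suffices to find a lower-triangular invertible $T$ and a length-$n$ subsequence on which the coordinate sequences of $Ta$ form a chain; we shall moreover work with a large parameter $Q=Q(d,q)$ in place of $q$, which only strengthens the conclusion.

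The construction processes $s=1,\dots,d$. At $s=1$: by general position at most $d-1$ of the points $a_i$ lie on the hyperplane $\{x_1=0\}$, so after discarding those and pigeonholing the sign of $\dd{a}{1}{i}$ (cost: one factor $2$ and $O(d)$ points) we choose $\varepsilon_1\in\{\pm1\}$ with $\dd{b}{1}{}:=\varepsilon_1\dd{a}{1}{}>0$, a chain of length one. For the inductive step, suppose that on the current subsequence we have positive sequences $\dd{b}{1}{},\dots,\dd{b}{s-1}{}$ forming a chain, with each $\dd{b}{j}{}$ a linear combination of $\dd{a}{1}{},\dots,\dd{a}{j}{}$ in which the coefficient of $\dd{a}{j}{}$ is nonzero (so the change of basis so far is lower triangular and invertible). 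To bring in $\dd{a}{s}{}$ we compare it with an already-built coordinate $\dd{b}{p}{}$: the real sequence $x_i:=\dd{a}{s}{i}/\dd{b}{p}{i}$ repeats each value at most $d-1$ times, because $x_i=x_j=\mu$ forces $a_i,a_j$ onto the hyperplane orthogonal to the coefficient vector of $\dd{a}{s}{}-\mu\,\dd{b}{p}{}$ — a nonzero vector, since its $s$th entry is $1$ — and general position bounds the number of $a$'s in any such hyperplane by $d-1$. Passing to a subsequence with distinct $x_i$ (cost: a factor $d-1$), Rosenthal's lemma (applied after a harmless shift making the $x_i$ positive) yields a translation $c$, a sign $\eta\in\{\pm1\}$, and a further subsequence on which $\eta(x_i-c)$ is positive and $Q$-monotone (i.e. $Q$-increasing or $1/Q$-decreasing). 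Setting $\dd{b}{s}{}:=\eta(\dd{a}{s}{}-c\,\dd{b}{p}{})$ makes $\dd{b}{s}{}/\dd{b}{p}{}$ positive and $Q$-monotone and keeps the triangular form. Iterating through all $s$ and tracking lengths (each stage uses a bounded number of applications of Rosenthal's bound plus bounded multiplicative losses) produces a finite $N=N(d,n,q)$; the resulting $T$ is lower triangular by construction, which is the ``furthermore''.

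The step I expect to be the genuine obstacle is getting the newly inserted $\dd{b}{s}{}$ to sit \emph{consistently} in the chain — i.e. ensuring that after stage $s$ the coordinates still form a chain, not merely that $\dd{b}{s}{}$ is comparable to its pivot. Two placements are automatic: if the pivot $\dd{b}{p}{}$ is the current minimum of the chain and $\dd{b}{s}{}/\dd{b}{p}{}$ comes out $1/Q$-decreasing, then for every $k$ the ratio $\dd{b}{k}{}/\dd{b}{s}{}=(\dd{b}{k}{}/\dd{b}{p}{})(\dd{b}{p}{}/\dd{b}{s}{})$ is a product of $Q$-increasing sequences, so $\dd{b}{s}{}$ becomes the new minimum; symmetrically for the maximum. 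But Rosenthal does not let us choose which of the two outcomes it produces, and in the other cases the relation of $\dd{b}{s}{}$ to the chain elements lying beyond the pivot is, a priori, a product of a $Q$-increasing by a $1/Q$-decreasing sequence and need not be monotone at all; re-running Rosenthal on such a ratio perturbs $\dd{b}{s}{}$ by a multiple of a chain combination and can spoil the comparisons already arranged. The way one has to attack this is to scan the pivots along the current chain (from the maximum downward, say) and to keep the chain edges $Q$-increasing with $Q$ chosen enormous relative to the number of stages and corrections remaining, so that any later correction adds only a negligible multiple of a steep sequence and a sum of $Q$-increasing sequences stays increasing; carrying out this bookkeeping so that after stage $d$ all $d$ coordinate sequences genuinely form a chain is the heart of the proof. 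Everything else is Rosenthal's lemma plus the routine general-position and pigeonhole reductions above.
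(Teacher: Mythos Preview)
Your reduction to Rosenthal's lemma is natural, and you have correctly located the real difficulty: after you have inserted $\dd{b}{s}{}$ against a single pivot $\dd{b}{p}{}$, you only control the ratio $\dd{b}{s}{}/\dd{b}{p}{}$, not the ratios of $\dd{b}{s}{}$ to the other chain members. But you do not actually close this gap. Your proposed fix --- scan pivots from the top, take $Q$ enormous, and hope that later corrections are ``negligible'' --- does not work as stated. A second application of Rosenthal at a new pivot $\dd{b}{p'}{}$ replaces $\dd{b}{s}{}$ by $\eta'(\dd{b}{s}{}-c'\dd{b}{p'}{})$; the constant $c'$ comes from a translation and is not bounded in terms of $Q$, so this can change $\dd{b}{s}{}/\dd{b}{p}{}$ by an arbitrary factor and destroy the monotonicity you just arranged, no matter how large $Q$ is. The sentence ``a sum of $Q$-increasing sequences stays increasing'' is true for sums of positive sequences but irrelevant here, since corrections involve subtraction. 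In short, the proposal is a proof sketch that stops exactly at the hard step and offers a heuristic that does not survive scrutiny.

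The paper's argument is genuinely different and explains why your pairwise approach stalls. Rather than iterating the $1$-dimensional Rosenthal lemma, the inductive step (Lemma~\ref{l:stretch}) colours $(d{+}1)$-tuples by data coming from the $d\times(d{+}1)$ minors of the point matrix, applies Ramsey, and then picks the new coordinate $\dd{b}{d}{}$ from among $d$ explicit candidates $\dd{c}{0}{},\ldots,\dd{c}{d-1}{}$: each $\dd{c}{k}{}$ is the linear combination of \emph{all} $d$ coordinate sequences determined by $d{-}1$ prescribed zeros (the first $k$ and the last $d{-}1{-}k$ positions). The structural Lemma~\ref{l:p-lines} shows that such a combination is dominated by a single coordinate term on each interval between zeros, which is exactly what slots it into the existing chain at a definite position. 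The Ramsey colour then forces some $\phi_{k+1}=0$, certifying that $\dd{c}{k}{}$ works. The point is that the right $\dd{b}{d}{}$ is in general a combination of all previous coordinates with carefully placed zeros, not a single-pivot correction $\eta(\dd{a}{s}{}-c\,\dd{b}{p}{})$; that extra freedom is what resolves the consistency problem you identified.
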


The case $d=2$ was proved by Rosenthal~\cite{Ros} in 1981 in slightly different form, see also~\cite{BuMa} for another proof and applications.
We will come back to Rosenthal's lemma in Section~\ref{sec:proof-univ}.

Our second main result is about Tverberg's theorem~\cite{Tver} which says the following.

\begin{theorem}\label{th:tverberg}
Assume $d,r \in \N$, $d\ge 1,r\ge 2$. Given $n=(r-1)(d+1)+1$ points in $\rr^d$, there is a partition of them into $r$ parts whose convex hulls have a point in common.
\end{theorem}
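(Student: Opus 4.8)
The plan is to deduce Tverberg's theorem from the colorful Carath\'eodory theorem via Sarkaria's tensor trick. Let $x_1,\dots,x_n\in\rr^d$ with $n=(r-1)(d+1)+1$, and homogenize by setting $\hat x_i=(x_i,1)\in\rr^{d+1}$. Fix vectors $b_1,\dots,b_r\in\rr^{r-1}$ that span $\rr^{r-1}$ and satisfy $b_1+\cdots+b_r=0$ (for instance the vertices of a regular simplex centered at the origin). For $i\in[n]$ and $j\in[r]$ put $y_{i,j}=\hat x_i\otimes b_j\in\rr^{(d+1)(r-1)}$ and collect them into $n$ ``color classes'' $C_i=\{y_{i,1},\dots,y_{i,r}\}$. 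Since $\frac{1}{r}\sum_{j=1}^r y_{i,j}=\hat x_i\otimes\bigl(\frac{1}{r}\sum_j b_j\bigr)=0$, the origin lies in $\conv C_i$ for each $i$.

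Next I would apply the colorful Carath\'eodory theorem in $\rr^D$ with $D=(d+1)(r-1)$: if $D+1$ finite sets each contain $0$ in their convex hull, then one can pick a transversal, one point per set, whose convex hull still contains $0$. The dimension count is exactly right, $D+1=(d+1)(r-1)+1=n$, so there is a function $j\colon[n]\to[r]$ and coefficients $\lambda_i\ge 0$ with $\sum_i\lambda_i=1$ and $\sum_i\lambda_i\,(\hat x_i\otimes b_{j(i)})=0$. Set $I_\ell=\{i\in[n]:j(i)=\ell\}$ and $v_\ell=\sum_{i\in I_\ell}\lambda_i\hat x_i\in\rr^{d+1}$; bilinearity of $\otimes$ gives $\sum_{\ell=1}^r v_\ell\otimes b_\ell=0$. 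Reading this identity one coordinate of $\rr^{d+1}$ at a time, and using that the kernel of $(c_1,\dots,c_r)\mapsto\sum_\ell c_\ell b_\ell$ is the line through $(1,\dots,1)$, we get $v_1=\cdots=v_r=:v$. Its last coordinate equals $\sum_{i\in I_\ell}\lambda_i$, so summing over $\ell$ yields $1=r\,v_{d+1}$, hence $v_{d+1}=1/r>0$; in particular each $I_\ell$ is nonempty, the numbers $\lambda_i/v_{d+1}$ ($i\in I_\ell$) are nonnegative and sum to $1$, and the point $p$ obtained by dividing the first $d$ coordinates of $v$ by $v_{d+1}$ lies in $\conv\{x_i:i\in I_\ell\}$ for every $\ell$. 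Thus $I_1,\dots,I_r$ is the required Tverberg partition.

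The genuinely nontrivial ingredient is the colorful Carath\'eodory theorem of B\'ar\'any, so the main obstacle is to supply it: one can simply cite it, or prove it by the standard descent argument — start from an arbitrary transversal, and while $0$ lies outside its convex hull, take the nearest-point hyperplane, swap in a point (which must exist) of the missing color class lying strictly on the origin side, and observe that the distance from $0$ to the hull strictly decreases, so the process terminates after finitely many transversals. Everything else is routine bookkeeping: checking the bilinearity manipulations for $\otimes$, verifying that the linear relations among the $b_j$ are exhausted by $\sum_j b_j=0$, and exploiting the homogenizing coordinate both to certify that no part $I_\ell$ is empty and to rescale the weights into honest convex combinations. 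The whole cleverness sits in choosing the tensor lift so that ``one point per color class'' decodes precisely into ``one summand per part of the partition.''
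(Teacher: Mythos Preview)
Your argument is correct: this is the standard Sarkaria tensor reduction to B\'ar\'any's colorful Carath\'eodory theorem, and the bookkeeping you outline (bilinearity of $\otimes$, the kernel computation for the $b_j$, the use of the homogenizing coordinate to guarantee nonempty parts and to normalize the weights) is all in order.

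There is nothing in the paper to compare it against. Theorem~\ref{th:tverberg} is stated as background --- it is Tverberg's 1966 result, cited as~\cite{Tver} --- and the paper supplies no proof of it. The paper's own work begins later, with the universality statements (Theorems~\ref{th:univ-sequen} and~\ref{th:univ-tverberg}), which take Tverberg's theorem as given. So your proof stands on its own as a valid derivation of a result the paper merely quotes.
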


The case $r=2$ is Radon's theorem~\cite{Rad} from 1920. Then $n=d+2$ so any $d+2$ points in $\rr^d$ can be split into two parts so that their convex hulls intersect. Next we give an example.
Assume that the points come from the moment curve $\gamma(t)=(t,t^2,\ldots,t^d)^T \in \rr^d$ ($t>0$), so we have $d+2$ points $\ga(t_1),\ldots,\ga(t_{d+2})$ with $0<t_1<\ldots<t_{d+2}$. It is well-known (see for instance Gr\"unbaum's book~\cite{Grun} or Matou\v{s}ek's~\cite{Mat}) that there is a unique Radon partition in this case, namely, one set is $P_1=\{\ga(t_i): i \mbox{ odd}\}$ and the other one is $P_2=\{\ga(t_i): i \mbox{ even}\}$. That is, the Radon partition is just two {\sl interlacing} sets, meaning that on the moment curve between two consecutive points of $P_1$ (resp $P_2$) there is a point of $P_2$ (and $P_1$). It is also known that this is the universal Radon partition: for every $d \in \N$ there is $N\in \N$ such that for any $d$-dimensional (general position) vector sequence $p_1,\ldots,p_N$ contains a subsequence $p_{i_1}\ldots,p_{i_{d+2}}$ with $i_1<i_2<\ldots<i_{d+2}$ such that their unique Radon partition is the interlacing sets
$P_1=\{p_{i_j}: j \mbox{ odd}\}$ and $P_2=\{p_{i_j}: j \mbox{ even}\}$. The moment curve shows that this is the unique universal Radon partition. Our second main result shows what the universal Tverberg partitions are. Before stating it further definitions are needed.

Let $n=T(r,d) = (r-1)(d+1)+1$ be the Tverberg number, and assume that $A_1 \cup \dots \cup A_r$ is a {\sl proper partition} of $[n]$ which means that $1\le |A_m|\le d+1$ for all $m\in [r]$. The sets $A_1,\ldots,A_r$ will be called {\sl color classes} or simply {\sl classes} of the partition. We define {\sl blocks} $B_1,\ldots,B_{d+1}$ by
\[B_s=\{(s-1)(r-1)+1,(s-1)(r-1)+2,\ldots,s(r-1)+1\}.
\]
So each block contains $r$ consecutive numbers from $[n]$, and they are almost disjoint: only $B_s$ and $B_{s+1}$ have a point in common, namely $s(r-1)+1$, for all $s \in [d]$.

Let $p_1, \dots, p_n \in \rr^d$ be points in {\sl strong general position} (the definition is given in Section~\ref{sec:tverberg}. The partition $A_1,\ldots,A_r$ of $[n]$ induces a partition of the sequence $p_1,\ldots,p_n$ into $r$ sets $P_m=\{p_i: i\in A_m\}$.

\begin{definition} The proper partition $A_1,\ldots,A_r$ of $[n]$ is called a rainbow partition if $|A_m\cap B_s|=1$ for all $m \in [r]$ and $s \in [d+1]$. The corresponding partition $P_1,\ldots,P_r$ of $P$ is also a rainbow partition.
\end{definition}

Again we try to use the notation $m,\al,\be \in [r]$ for the subscripts of the color classes. We remark here that for $r=2$ a rainbow partition $A_1,A_2$ is two interlacing sets. It is known that for the points $\ga(t_1),\ldots,\ga(t_n)$ on the moment curve the Tverberg partitions are exactly the rainbow partitions if the points $t_1<t_2<\ldots <t_n$ are chosen suitably, namely, heavily increasing. This is an unpublished observation of B\'ar\'any and P\'or, and also of Mabillard and Wagner, see also \cite{PerSi2}. Bukh, Loh and Nivasch~\cite{BuLoNi} prove the analogous statement for the points on the diagonal of the stretched grid, for the definition see their paper. 

Here is the universality theorem for Tverberg partitions.

\begin{theorem}\label{th:univ-tverberg} Given $d,r \in \N$ with $r\ge 2$, there is $N=N(d,r)\in \N$ such that every sequence of length $N$ of $d$-dimensional vectors in strong general position  contains a subsequence $p_1,\ldots,p_n$ with $n=T(r,d)$ whose Tverberg partitions are exactly the rainbow partitions.
\end{theorem}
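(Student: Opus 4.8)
The strategy is to reduce the statement to Theorem~\ref{th:univ-sequen} by a two-step argument: first normalize the point sequence so that it looks like a $q$-pseudo-geometric sequence for a suitably large $q=q(d,r)$, and then show that for such sequences the Tverberg partitions are forced to be exactly the rainbow partitions. For the first step, given a long sequence of points in strong general position in $\rr^d$, I would pass to the $d$-dimensional coordinate sequence (after discarding a point or translating so all coordinates become positive — this is legitimate since Tverberg partitions are affine invariants, and strong general position is preserved under the affine maps we use) and apply Theorem~\ref{th:univ-sequen} with a value of $q$ chosen large enough (to be determined by the calculations in the second step). This yields an invertible lower-triangular $T$ and a subsequence $p_{i_1},\ldots,p_{i_n}$ of length $n=T(r,d)$ such that, after applying $T$, the subsequence is $q$-pseudo-geometric. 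Since $T$ is invertible and affine maps send Tverberg partitions to Tverberg partitions bijectively, it suffices to prove the theorem for a $q$-pseudo-geometric sequence $p_1,\ldots,p_n$ with $q$ large.

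For the second step — the heart of the argument — I must show two things about a $q$-pseudo-geometric sequence $p_1,\ldots,p_n$ of length $n=(r-1)(d+1)+1$ with $q$ large: (a) every rainbow partition is a Tverberg partition, and (b) every Tverberg partition is a rainbow partition. For (a), I would exhibit a common point of the $r$ convex hulls explicitly, or at least show one exists, using the approximate "scale separation" that $q$-pseudo-geometry provides: in each coordinate $s$, the entries $\dd{p}{s}{1},\dd{p}{s}{2},\ldots$ are essentially ordered by rapidly growing (or shrinking) magnitude, so the sequence behaves like points on a heavily-stretched moment-type curve, for which the rainbow partitions are known to be Tverberg partitions (this is the B\'ar\'any--P\'or / Mabillard--Wagner observation, and Bukh--Loh--Nivasch's stretched-grid result, cited in the excerpt). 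Concretely, for a rainbow partition $A_1,\ldots,A_r$, since $|A_m\cap B_s|=1$, each color class picks exactly one index from each block; I would solve the linear system expressing that the weighted barycenters of the $P_m$ coincide, and verify that the solution has strictly positive weights by a perturbation/dominant-term estimate valid once $q>q_0(d,r)$. For (b), I would argue the converse by a counting/extremality argument: the rainbow partitions already account for all Tverberg partitions in the limiting stretched model, and since "being a Tverberg partition" is an open condition (strong general position guarantees the common point is unique and lies in the relative interiors, so small perturbations preserve the partition structure), no additional partition can suddenly become Tverberg for large but finite $q$; a compactness or continuity argument degenerating $q\to\infty$ makes this precise. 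Alternatively, one shows directly that a non-rainbow proper partition fails: if some $|A_m\cap B_s|\ge 2$ then by pigeonhole some other $|A_{m'}\cap B_{s'}|=0$, and the scale-separation in coordinate $s'$ forces the hull of $P_{m'}$ to avoid the region where the others meet.

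The main obstacle I anticipate is step (b), ruling out non-rainbow Tverberg partitions, and more specifically making the "scale separation implies the common point is pinned down" intuition rigorous with explicit inequalities: one needs a quantitative version saying that in a $q$-pseudo-geometric sequence the sign patterns of all the relevant determinants (those governing which partitions are Tverberg) are determined by the combinatorial order type alone, provided $q$ exceeds a threshold depending only on $d$ and $r$. This is essentially a statement that the "chirotope" or oriented-matroid data of a sufficiently pseudo-geometric sequence equals that of points on the moment curve — plausibly provable by expanding each determinant and checking that a single monomial dominates when $q$ is large — but the bookkeeping over all $r$-partitions and all $(d+1)\times(d+1)$ minors is where the real work lies. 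A secondary technical point is verifying that strong general position (defined in Section~\ref{sec:tverberg}) is inherited by the transformed subsequence and is exactly the genericity needed for the uniqueness-of-common-point argument; I would handle this by noting strong general position is generic and the lower-triangular transformation from Theorem~\ref{th:univ-sequen} together with the coordinatewise positivity normalization preserves it.
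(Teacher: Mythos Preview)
Your overall architecture matches the paper's: normalize via Theorem~\ref{th:univ-sequen}, then determine the Tverberg partitions by showing that in each Cramer-rule determinant $\det(M_\ell)$ a single monomial dominates, so the sign pattern is combinatorially forced. You also correctly flag the dominant-monomial bookkeeping as the crux.

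The genuine gap is that $q$-pseudo-geometric, for however large a $q=q(d,r)$, is \emph{not} enough to force a dominant monomial. The ratio governing a $z$-switch in the expansion of $\det(M_\ell)$ has the form $\prod_{u} f_a(u,i_u,j_u)$ with $f_a(u,i,j)=\dd{a}{u+1}{j}\dd{a}{u}{i}/(\dd{a}{u}{j}\dd{a}{u+1}{i})$; some factors exceed $q$ and others are below $1/q$, and which side wins depends on cross-comparisons like $f_a(t,i,j)/f_a(s,j,k)$ that $q$-pseudo-geometry simply does not control. Consequently the chirotope of a $q$-pseudo-geometric sequence is \emph{not} determined by $q$ alone, and your ``single monomial dominates for $q$ large'' step fails as stated. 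The paper fixes this with an additional Ramsey pass (Section~\ref{sec:orderdominant}): one colors triples by the relation of $f_a(t,i,j)/f_a(s,j,k)$ to $[1/q,q]$ and extracts a \emph{dominant} (indeed \emph{super-dominant}) subsequence, on which these cross-ratios are globally consistent. This yields a total order $\vdash$ on the coordinates, and Corollary~\ref{cor:crux} then says the sign of the product above is decided by the $\vdash$-maximal index, which is exactly what makes the dominant-monomial machinery (Lemmas~\ref{l:switch}, \ref{l:maxdom}, \ref{l:split}) go through.

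Your fallback for (b), a compactness/continuity argument letting $q\to\infty$, does not work either: there is no single limiting configuration, since different $q$-pseudo-geometric sequences (same $q$) realize different order types, so there is nothing to degenerate to. One smaller point: the paper works with the $(d{+}1)$-dimensional sequence $(1,p)$ rather than the bare $d$-dimensional $p$, because the affine Tverberg system naturally lives there; this is where the extra coordinate in the $G_w$ matrices comes from.
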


This has been conjectured by Bukh, Loh and Nivasch~\cite{BuLoNi} and proved there for $d=2$ and in some further special cases. The following question emerged in connection with the results of~\cite{BuNi}. Given a finite set $P \subset \rr^d$ with $|P|$ sufficiently large, are there disjoint subsets $X,Y \subset P$ with $|X|=d+2$, $|Y|=d+1$ such that $\conv Y$ contains 
the Radon point of $X$. Theorem~\ref{th:univ-tverberg} answers this question affirmatively: choose $r\ge 3$, suppose $|P|=N$ where $n=T(r,d)$, write the points of $P$ in a sequence, and let $q_1,q_2,\ldots,q_n$ be the subsequence guaranteed by the theorem. There is a rainbow partition $A_1,\ldots,A_r$ of $[n]$ with 
$|A_1|=\lceil (d+2)/2\rceil,\; |A_2|=\lfloor (d+2)/2\rfloor$, and $|A_m|=d+1$ for all $m>2$. Then the sets in $P$ corresponding to $A_1\cup A_2$ and $A_3$ satisfy the requirement. 

The proof method of Theorem~\ref{th:univ-tverberg} yields the following apparently stronger result.

\begin{theorem}\label{th:univtverberg} Given $d,m,r \in \N$ with $r\ge 2$ and $m\ge n=T(r,d)$, there is $N=N(d,m,r)\in \N$ such that every sequence of length $N$ of $d$-dimensional vectors in strong general position contains a subsequence $p_1,\ldots,p_m$ of length $m$ with the following property.  For every subsequence $p_{i_1},\ldots,p_{i_n}$ its Tverberg partitions are exactly the rainbow partitions.
\end{theorem}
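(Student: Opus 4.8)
The plan is to derive this from Theorem~\ref{th:univ-tverberg} by an iterated/compactness-style argument rather than redoing the whole proof. The key observation is that Theorem~\ref{th:univ-tverberg} already produces, from any sufficiently long sequence, a subsequence of length $n = T(r,d)$ whose Tverberg partitions are exactly the rainbow partitions; what we need in addition is that a \emph{single} length-$m$ subsequence works \emph{simultaneously} for all its $\binom{m}{n}$ length-$n$ sub-subsequences. Since a length-$n$ sub-subsequence $p_{i_1},\ldots,p_{i_n}$ of a length-$m$ sequence in strong general position is again in strong general position, we want its Tverberg partitions to be exactly the rainbow partitions relative to the induced ordering $1<\cdots<n$. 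So the target property $P_m$ — ``every length-$n$ subsequence has exactly the rainbow partitions as its Tverberg partitions'' — is itself a property of length-$m$ vector sequences, and it suffices to show $P_m$ is universal in the sense of the paper's introduction.

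First I would set up a Ramsey-type iteration. Let $N_1 = N(d,r)$ be the bound from Theorem~\ref{th:univ-tverberg}. Given a sequence of length $N$ (with $N$ huge, to be specified), apply Theorem~\ref{th:univ-tverberg} to extract a length-$n$ subsequence with the rainbow property; but to get the \emph{uniform} statement I would instead color, for each increasing $n$-tuple of indices, whether or not the corresponding sub-subsequence has the rainbow property, and use the hypergraph Ramsey theorem together with Theorem~\ref{th:univ-tverberg} to force a large ``monochromatic good'' block. Concretely: by Theorem~\ref{th:univ-tverberg}, any $N_1$ points contain a good $n$-tuple, so the coloring can never be entirely ``bad'' on a set of size $N_1$; hence by Ramsey's theorem for $n$-uniform hypergraphs, any sufficiently long sequence contains a subsequence of length $m$ \emph{all} of whose $n$-tuples are good. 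Setting $N(d,m,r)$ to be the corresponding Ramsey number (as a function of $N_1 = N(d,r)$, $m$, and $n$) completes the extraction.

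The main obstacle is the order-dependence: ``rainbow partition'' is defined via the blocks $B_s$, which depend on the ordering of the $n$ chosen points. When we pass to a sub-subsequence $p_{i_1},\ldots,p_{i_n}$ of the length-$m$ sequence and re-index it as $1,\ldots,n$, the blocks $B_s$ are recomputed for this new labeling, and Theorem~\ref{th:univ-tverberg} applied to the sub-subsequence already guarantees exactly this. So there is in fact no subtlety beyond making sure the Ramsey coloring is applied to ordered $n$-tuples (which it is), and that strong general position is inherited by subsequences (which it is, since strong general position is a finite list of non-degeneracy conditions each closed under taking subsets). One must also check the edge case that the ``bad'' color cannot appear on an $N_1$-subset of any length-$m$ sequence we consider — but that is immediate from Theorem~\ref{th:univ-tverberg} applied within that $N_1$-subset. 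Hence the iterated Ramsey argument goes through and yields the stated $N(d,m,r)$.
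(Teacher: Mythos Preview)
Your argument is correct, and it is a genuine black-box reduction: you colour each ordered $n$-tuple from the ambient sequence ``good'' or ``bad'' according to whether its Tverberg partitions are exactly the rainbow ones, invoke the $n$-uniform Ramsey theorem to get a monochromatic subset of size $\max(m,N_1)$, and then use Theorem~\ref{th:univ-tverberg} inside that subset to rule out the all-bad colour. The checks you flag (strong general position passes to subsequences; the rainbow condition is read off from the induced order on the chosen $n$ points) are exactly the right ones, and they go through.

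The paper proceeds differently: it does not treat Theorem~\ref{th:univ-tverberg} as a black box but reopens its proof. There the work is to first pass to a \emph{super-dominant} $q$-increasing subsequence and then argue, via the $G_w$ matrices and the dominant-monomial analysis, that for any such sequence of length $n$ the Tverberg partitions are precisely the rainbow ones. The point is that super-dominance (together with the induced $\vdash$ order on $[d]$) is hereditary under taking subsequences, so if one simply extracts a super-dominant subsequence of length $m$ rather than $n$, every length-$n$ sub-subsequence is again super-dominant with the same $\vdash$, and the entire determinant argument of Sections~\ref{sec:dominant-filling}--\ref{sec:mainproofcont} applies to it verbatim. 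So the paper's route identifies the structural reason the strengthening costs nothing, whereas your route trades that insight for modularity: you never touch the internals, at the price of one more Ramsey layer (harmless, since the bounds are already Ramsey-type). Both are valid; yours is the cleaner formal deduction, the paper's is the more explanatory one.
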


The paper is organized as follows. The proof of Theorem~\ref{th:univ-sequen} is proved in Section~\ref{sec:proof-univ}, with some preparations in Section~\ref{sec:prepseq}.  Dominant $q$-increasing sequences, an important tool in the universality of Tverberg partitions, are presented in Section~\ref{sec:orderdominant}. The linear equation related to Tverberg partitions and the $G_w$ matrices are introduced in Section~\ref{sec:tverberg}. The linear equation formulation implies that the sign patterns of certain determinants decide whether a given partition is Tverberg or not. This leads to the question of finding the dominant monomial in the expansion of these determinants in Sections~\ref{sec:dominant-filling} and ~\ref{sec:finddom}. The proof of Theorem~\ref{th:univ-tverberg} is given in Sections~\ref{sec:mainproof} and ~\ref{sec:mainproofcont}.

\section{Preparations for the proof of Theorem~\ref{th:univ-sequen}}\label{sec:prepseq}

We begin with a few simple observations. Assume $a$ is a $q$-pseudo-geometric sequence. Then the subsequence $a'$ of $a$ where we take each $k$th element, $a_i'=a_{ik}$, is $q^k$-pseudo-geometric.

Again, assume $a$ is $q$-pseudo-geometric and let $s,t\in [d]$ be two different coordinates.
We say that the $t$th coordinate {\sl grows faster} than the $s$th coordinate if 
$\frac{\dd{a}{t}{}}{\dd{a}{s}{}}$ is $q$-increasing. This is a total order on $[d]$.
Therefore there exists a unique permutation matrix $T$ such that in $Ta$ the coordinates 
are already ordered increasingly. That is for every $1\leq t <d$ the sequence 
$\frac{\dd{Ta}{t+1}{}}{\dd{Ta}{t}{}}$ is $q$-increasing.
We say that the sequence $Ta$ is ordered and $q$-increasing, or simply that it is ordered.

The following lemma is a key component in the proof of Theorem~\ref{th:univ-sequen}.

\begin{lemma}\label{l:p-lines}
Let $d>1$ and let $q>3$ be a real number.
Let $a$ be an ordered $d$-dimensional $q$-pseudo-geometric sequence. 
Let $b=\sum \alpha_t \cdot \dd{a}{t}{}$ be a linear combination of the $d$ 
coordinate sequences such that $b$ has $(d-1)$ zero elements
$b_{j_1}=\dots = b_{j_{d-1}}=0$, where for each $t=1,\dots, d-2$ we have $j_t +2< j_{t+1}$. Let $j_0=-\infty$ and $j_d=\infty$.
Then the signs of $\alpha_t$ are alternating and 
for every integer $D>0$ with $j_{t-1}+D<i<j_t-D$ we have $(1-2q^{-D})  |\alpha_t \cdot \dd{a}{t}{i}| <  |\alpha_t \cdot \dd{a}{t}{i}| - (|\alpha_{t-1} \cdot \dd{a}{t-1}{i}| + |\alpha_{t+1} \cdot \dd{a}{t+1}{i}|) < |b_i| < |\alpha_t \cdot \dd{a}{t}{i}|$.
The sequence $b$ only changes sign at the pre-described zeros. 
\end{lemma}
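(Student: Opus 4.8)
The key intuition is that in an ordered $q$-pseudo-geometric sequence, the coordinate sequences grow at vastly different rates: near index $i$, the $t$th coordinate dominates the $(t-1)$st by a factor that grows like $q$ per step. So in the linear combination $b=\sum_t \alpha_t \dd{a}{t}{}$, on the interval $(j_{t-1},j_t)$ (away from the endpoints) the single term $\alpha_t\dd{a}{t}{}$ should dominate all the others combined; this both fixes the sign of $b_i$ on that interval and forces $b$ to vanish only near where the dominant term switches, i.e. at the prescribed zeros. The alternation of signs of the $\alpha_t$ then follows because between $j_{t-1}$ and $j_{t+1}$ the sign of $b$ flips exactly once (at $j_t$), and that sign is governed by $\mathrm{sgn}(\alpha_t)$ on one side and $\mathrm{sgn}(\alpha_{t+1})$ on the other, so consecutive $\alpha$'s have opposite signs.

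First I would record the basic growth estimate. Since $a$ is ordered and $q$-pseudo-geometric with $q>3$, for $s<t$ the ratio $\dd{a}{t}{}/\dd{a}{s}{}$ is $q$-increasing, so for indices $i<i'$ one has $\frac{\dd{a}{t}{i'}/\dd{a}{s}{i'}}{\dd{a}{t}{i}/\dd{a}{s}{i}} > q^{\,i'-i}$; equivalently, for any two coordinates and any shift by $D$ steps in the "wrong" direction the subdominant coordinate is suppressed by at least $q^{-D}$ relative to the dominant one. I would then isolate the zero condition: $b_{j_t}=0$ means $\sum_{s} \alpha_s \dd{a}{s}{j_t}=0$. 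The plan is to show that at an index $j_t$, the two neighboring terms $\alpha_t\dd{a}{t}{j_t}$ and $\alpha_{t+1}\dd{a}{t+1}{j_t}$ are the two largest in absolute value and are comparable, while all the others are geometrically smaller; since they must (nearly) cancel, $\alpha_t$ and $\alpha_{t+1}$ have opposite signs and $|\alpha_t\dd{a}{t}{j_t}|\approx|\alpha_{t+1}\dd{a}{t+1}{j_t}|$. The spacing hypothesis $j_t+2<j_{t+1}$ is what guarantees these "crossover scales" are separated enough that each interval $(j_{t-1},j_t)$ has a genuine single dominant term.

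Next, I would fix $t$ and an index $i$ with $j_{t-1}+D<i<j_t-D$ and compare $|\alpha_t\dd{a}{t}{i}|$ with $|\alpha_s\dd{a}{s}{i}|$ for $s\ne t$. Using the estimates from the previous paragraph together with the approximate equalities $|\alpha_s\dd{a}{s}{j_s}|\approx|\alpha_{s+1}\dd{a}{s+1}{j_s}|$ at each crossover $j_s$, one chains inequalities across the crossovers between $s$ and $t$ to get $|\alpha_s\dd{a}{s}{i}| < q^{-D}|\alpha_t\dd{a}{t}{i}|$ for every $s\ne t$ (the exponent being at least $D$ because $i$ is at least $D$ past the nearest crossover $j_{t-1}$ or $j_t$, and the farther coordinates only decay faster). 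Summing the geometric series $\sum_{s\ne t} q^{-|s-t|D}\le \frac{2q^{-D}}{1-q^{-D}}\le 2q^{-D}$ (using $q>3$ so $q^{-D}\le 1/3$) gives $\sum_{s\ne t}|\alpha_s\dd{a}{s}{i}| < 2q^{-D}|\alpha_t\dd{a}{t}{i}|$, and in particular the bound with just the two neighbors $|\alpha_{t-1}\dd{a}{t-1}{i}|+|\alpha_{t+1}\dd{a}{t+1}{i}|$. The desired inequality chain
\[
(1-2q^{-D})|\alpha_t\dd{a}{t}{i}| < |\alpha_t\dd{a}{t}{i}| - \bigl(|\alpha_{t-1}\dd{a}{t-1}{i}|+|\alpha_{t+1}\dd{a}{t+1}{i}|\bigr) < |b_i| < |\alpha_t\dd{a}{t}{i}|
\]
then follows: the left inequality is the two-neighbor bound just derived, the middle one is the reverse triangle inequality $|b_i|\ge |\alpha_t\dd{a}{t}{i}| - \sum_{s\ne t}|\alpha_s\dd{a}{s}{i}|$, and the right one is the ordinary triangle inequality together with $\sum_{s\ne t}|\alpha_s\dd{a}{s}{i}| > 0$. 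Finally, since on the interior of each interval $(j_{t-1},j_t)$ the term $\alpha_t\dd{a}{t}{}$ strictly dominates, $b$ has constant sign $\mathrm{sgn}(\alpha_t)$ there, and the only places it can change sign are the prescribed zeros $j_1,\dots,j_{d-1}$; combined with the sign flip across each $j_t$ this re-proves, and is consistent with, the alternation of the $\alpha_t$.

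The main obstacle I anticipate is making the bookkeeping at the crossover points fully rigorous: controlling the implied constants in "$|\alpha_s\dd{a}{s}{j_s}|\approx|\alpha_{s+1}\dd{a}{s+1}{j_s}|$" and ensuring that the accumulated error from chaining across several crossovers (there can be up to $d-1$ of them) does not overwhelm the $q^{-D}$ gains. The spacing condition $j_t+2<j_{t+1}$ and the hypothesis $q>3$ are presumably exactly calibrated so the geometric decay beats the linear accumulation; I would want to track the constants carefully, perhaps proving an auxiliary claim that at each crossover the ratio $|\alpha_{t+1}\dd{a}{t+1}{j_t}|/|\alpha_t\dd{a}{t}{j_t}|$ lies in a fixed interval like $[q^{-1},q]$, and then induct on $t$.
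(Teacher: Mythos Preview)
Your outline has the right geometric picture, but there are genuine gaps in the inequality chain, and the overall architecture is more fragile than the paper's.

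First, the upper bound $|b_i|<|\alpha_t\,\dd{a}{t}{i}|$ is not a triangle-inequality consequence: the triangle inequality only gives $|b_i|\le\sum_s|\alpha_s\,\dd{a}{s}{i}|$, which goes the wrong way. Likewise, the middle inequality asks for $|b_i|>|\alpha_t\,\dd{a}{t}{i}|-(|\alpha_{t-1}\,\dd{a}{t-1}{i}|+|\alpha_{t+1}\,\dd{a}{t+1}{i}|)$, subtracting only the two neighbours; the reverse triangle inequality subtracts \emph{all} the other terms and so gives a strictly weaker lower bound. Both inequalities actually rest on the fact that the terms $\alpha_s\,\dd{a}{s}{i}$ alternate in sign and are unimodal in $s$ (increasing in absolute value up to the peak at $s=t$, then decreasing): for such an alternating unimodal sum the absolute value is trapped between the peak and the peak minus its two neighbours. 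That is precisely the mechanism the paper uses, and it is not recoverable from triangle inequalities alone.

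Second, the crossover bookkeeping you flag as an obstacle is indeed circular as you have set it up: you want $|\alpha_s\,\dd{a}{s}{j_s}|\approx|\alpha_{s+1}\,\dd{a}{s+1}{j_s}|$ at the zero $j_s$, but to isolate those two terms from the rest of the sum $b_{j_s}=0$ you already need the dominance structure you are trying to establish, and chaining these approximate equalities across up to $d-1$ crossovers compounds the problem. The paper sidesteps this entirely by introducing $\beta(i)=\arg\max_s|\alpha_s|\,\dd{a}{s}{i}$, proving directly from the ordered $q$-increasing property that $\beta$ is nondecreasing in $i$, and then running a pigeonhole argument: near each zero $j_t$ the value of $\beta$ must change (otherwise a single term strictly dominates and $b_{j_t}$ could not vanish), there are $d-1$ zeros, and $\beta$ ranges in $[d]$, so each jump is by exactly one and $\beta(j_t-1)=t$, $\beta(j_t+1)=t+1$. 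This pins down the crossover locations with no induction and no error accumulation, and immediately delivers both the alternation of the $\alpha_t$ and the unimodality needed for the inequality chain above.
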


\begin{proof}[Proof of Lemma~\ref{l:p-lines}]
	For every $i \in [n]$ let $\beta(i)\in [d]$ be the smallest $t \in [d]$ such that $|\alpha_t| \cdot \dd{a}{t}{i}$ is the largest element in the set 
	$\{ |\alpha_1| \cdot \dd{a}{1}{i}, \dots, |\alpha_d| \cdot \dd{a}{d}{i} \}$.
	We claim that $\beta(1)\leq \dots \leq \beta(n)$.
	Assume on the contrary that for some $i$ we have $\beta(i) = t > s = \beta(i+1)$. This implies 
	$|\alpha_t| \cdot \dd{a}{t}{i} \geq |\alpha_s| \cdot \dd{a}{s}{i}$ and $|\alpha_t| \cdot \dd{a}{t}{i+1} \leq |\alpha_s| \cdot \dd{a}{s}{i+1}$ so
\[
\frac{\dd{a}{t}{i}}{\dd{a}{s}{i}} \geq \frac{|\alpha_s|}{|\alpha_t|}
\geq \frac{\dd{a}{t}{i+1}}{\dd{a}{s}{i+1}}
\]
which contradicts that the sequence $\frac{\dd{a}{t}{}}{\dd{a}{s}{}}$ is $q$-increasing. \\

If $\beta(j)=s$ then for any $k\neq s$ we have
$|\alpha_k| \cdot \dd{a}{k}{j}\leq |\alpha_s| \cdot \dd{a}{s}{j}$.
Therefore
\begin{align}
q^{(s-k)(i-j)}|\alpha_k| \cdot \dd{a}{k}{i} &\leq |\alpha_s| \cdot \dd{a}{s}{i}
\;\;\;\;\textrm{ for } k<s \textrm{ and } i\geq j \label{eqn:beta1} \\ 
q^{(k-s)(j-i)}|\alpha_k| \cdot \dd{a}{k}{i} &\leq |\alpha_s| \cdot \dd{a}{s}{i}
\;\;\;\;\textrm{ for } k>s \textrm{ and } i\leq j \label{eqn:beta2}
\end{align}
Since $q>3$ we get for $i>j$ that
\begin{align}
& |\alpha_1 \cdot \dd{a}{1}{i} + \dots + \alpha_{s-1} \cdot \dd{a}{s-1}{i}| \leq
|\alpha_1| \cdot \dd{a}{1}{i} + \dots + |\alpha_{s-1}| \cdot \dd{a}{s-1}{i} 
\label{eqn:beta3} \\
\nonumber < \; &
(q^{(-s+1)(i-j)}+\dots+q^{-(i-j)})|\alpha_{s}| \cdot \dd{a}{s}{i}\\
\nonumber < \; & 
\frac{1}{q^{(i-j)}-1} |\alpha_{s}| \cdot \dd{a}{s}{j} < \frac{1}{2} |\alpha_{s}| \cdot \dd{a}{s}{i}
\end{align}
and for $i<j$
\begin{align}
& |\alpha_{s+1} \cdot \dd{a}{s+1}{i} + \dots + \alpha_{d} \cdot \dd{a}{d}{i}| \leq
|\alpha_{s+1}| \cdot \dd{a}{s+1}{i} + \dots + |\alpha_{d}| \cdot \dd{a}{d}{i} 
\label{eqn:beta4} \\
\nonumber < \; &
(q^{-(j-i)}+\dots+q^{(-d+s)(j-i)})|\alpha_{s}| \cdot \dd{a}{s}{i}\\
\nonumber < \; & 
\frac{1}{q^{(j-i)}-1} |\alpha_{s}| \cdot \dd{a}{s}{i} <
\frac{1}{2} |\alpha_{s}| \cdot \dd{a}{s}{i}
\end{align}

Let $t\in [d-1]$. We claim that $\beta$ cannot take the same value on the three consecutive elements around $j=j_t$.
Assume on the contrary that $\beta(j-1)=\beta(j)=\beta(j+1)=s$.
Apply inequality (\ref{eqn:beta3}) with $j=j_t-1$ and $i=j_t$ 
\[
 |\alpha_1 \cdot \dd{a}{1}{j_t} + \dots + \alpha_{s-1} \cdot \dd{a}{s-1}{j_t}| 
 < \frac{1}{2} |\alpha_{s}| \cdot \dd{a}{s}{j_t}
\]
and apply inequality (\ref{eqn:beta4}) with $j=j_t+1$ and $i=j_t$
\[
|\alpha_{s+1} \cdot \dd{a}{s+1}{j_t} + \dots + \alpha_{d} \cdot \dd{a}{d}{j_t}| 
< \frac{1}{2} |\alpha_{s}| \cdot \dd{a}{s}{j_t}
\]

Therefore $b_{j_t}$ cannot be $0$ which is a contradiction.\\

So $\beta$ has to increase by at least one from $j_t-1$ to $j_t+1$ for every $t$.
That is $d-1$ increases which implies that each increase is exactly by one and that $\beta(j_t-1) = t$ and $\beta(j_t+1) = t+1$.
Apply inequality (\ref{eqn:beta3}) with $j=j_t-1$ and $i=j_t$ 
\[
|\alpha_1 \cdot \dd{a}{1}{j_t} + \dots + \alpha_{t-1} \cdot \dd{a}{t-1}{j_t}| 
< \frac{1}{2} |\alpha_{t}| \cdot \dd{a}{t}{j_t}
\]
and apply inequality (\ref{eqn:beta4}) with $j=j_t+1$ and $i=j_t$
\[
|\alpha_{t+2} \cdot \dd{a}{t+2}{j_t} + \dots + \alpha_{d} \cdot \dd{a}{d}{j_t}| 
< \frac{1}{2} |\alpha_{t+1}| \cdot \dd{a}{t+1}{j_t}
\]
So $\alpha_t$ and $\alpha_{t+1}$ have different signs, otherwise $b_{j_t} \neq 0$.

Let $D>0$ be an integer and let $j_{t-1}+D<i<j_t-D$.
The signs of the terms in the sum $\sum_{s=1}^d \alpha_s \cdot \dd{a}{s}{i}$
are alternating, and the terms are increasing in absolute values till 
$\dd{a}{t}{i}$ and decreasing from there on. So  
\begin{align*}
|\alpha_t| \cdot \dd{a}{t}{i}-
 |\alpha_{t-1}| \cdot \dd{a}{t-1}{i}-
 |\alpha_{t+1}| \cdot \dd{a}{t+1}{i}& <
 |\sum_{s=1}^d \alpha_s \cdot \dd{a}{s}{i}| <|\alpha_t| \cdot \dd{a}{t}{i}
\end{align*} 
The statement of the Lemma follows from inequalities (\ref{eqn:beta1}) and (\ref{eqn:beta2}).	
\end{proof}

\section{Proof of Theorem~\ref{th:univ-sequen}}\label{sec:proof-univ}

In his thesis~\cite{Ros} Rosenthal proved the following result, in a slightly different form.
\begin{lemma} 
Let $q>1$ be a real number. For every integer $n$ there exists $N = N(n,q)$ such that
if $a$ is a $2$-dimensional sequence in general position of length $N$ then there exists a linear combination of the two sequences
$\dd{b}{2}{}=\alpha_1 \cdot \dd{a}{1}{} + \alpha_2 \cdot \dd{a}{2}{}$ such that the $2$-dimensional sequence $b = (\dd{a}{1}{}, \dd{b}{2}{})$ has a $q$-pseudo-geometric 
subsequence of length $n$.
\end{lemma}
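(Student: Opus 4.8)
The plan is to reduce this two-dimensional assertion to the one-dimensional Rosenthal lemma quoted in the abstract, by passing from $a$ to the sequence of coordinate ratios. First a normalization: by general position at most one element $a_i$ has $\dd{a}{1}{i}=0$ (two such elements would be linearly dependent), so we discard it, and then, passing to a subsequence, we may assume that all first coordinates $\dd{a}{1}{i}$ have the same sign, in fact that $\dd{a}{1}{i}>0$ for every $i$ (the all-negative case is symmetric). This costs only a factor $2$ and an additive constant in $N$.

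Now set $u_i:=\dd{a}{2}{i}/\dd{a}{1}{i}$ for each remaining index. These reals are pairwise distinct, since $u_i=u_j$ would force $a_i$ and $a_j$ to be parallel. The key dictionary is: for any $\alpha_1,\alpha_2\in\rr$ and $\dd{b}{2}{}=\alpha_1\cdot\dd{a}{1}{}+\alpha_2\cdot\dd{a}{2}{}$ one has $\dd{b}{2}{i}/\dd{a}{1}{i}=\alpha_1+\alpha_2 u_i$, an affine image of the sequence $(u_i)$, and since $\dd{a}{1}{i}>0$ the sign of $\dd{b}{2}{i}$ equals that of $\alpha_1+\alpha_2 u_i$. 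Consequently, for indices $k_1<\dots<k_n$, both coordinate sequences of $b=(\dd{a}{1}{},\dd{b}{2}{})$ restricted to $k_1,\dots,k_n$ are positive as soon as $(\alpha_1+\alpha_2 u_{k_j})_{j=1}^n$ is positive, and then $b$ is $q$-pseudo-geometric there precisely when this last sequence is $q$-increasing (equivalently, $\dd{b}{2}{}/\dd{a}{1}{}$ is) or $1/q$-decreasing (equivalently, the reciprocal $\dd{a}{1}{}/\dd{b}{2}{}$ is $q$-increasing).

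Hence it suffices to find $\alpha_1,\alpha_2\in\rr$ and indices $k_1<\dots<k_n$ so that $(\alpha_1+\alpha_2 u_{k_j})_j$ is positive and either $q$-increasing or $1/q$-decreasing, and this is exactly what the one-dimensional Rosenthal lemma delivers: applied to $u_1,\dots,u_{N'}$ (after a translation making the input positive) with $N'=N'(n,q)$ large enough, it yields a subsequence $u_{k_1},\dots,u_{k_n}$, a real $\beta$ and a sign $\varepsilon\in\{+1,-1\}$ such that $\varepsilon u_{k_j}+\beta$, $j=1,\dots,n$, is positive and $q$-increasing or $1/q$-decreasing. Taking $\alpha_2=\varepsilon$ and $\alpha_1=\beta$ completes the proof, with $N=2N'(n,q)+1$.

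The one place where care is needed is the normalization hidden in the phrase ``a suitable translation and possible multiplication by $-1$'' of the one-dimensional lemma: one uses the translation freedom to also make the output sequence positive (translate a $q$-increasing sequence so that its first term is positive, a $1/q$-decreasing one so that its last term is positive), and one notes that a positive $1/q$-decreasing sequence $v$ has $1/v$ positive and $q$-increasing, so that the decreasing alternative still yields a genuine $q$-pseudo-geometric pair. If one prefers not to invoke the one-dimensional result, the same bound can be reached directly: by Erd\H os--Szekeres pass to a subsequence on which $(u_i)$ is monotone, and then select a sub-subsequence greedily, from whichever end is appropriate, so that the differences to the first (resp.\ last) chosen point scale by a factor exceeding $q$; this amounts to reproving the one-dimensional Rosenthal lemma and is where the large $N$ is spent.
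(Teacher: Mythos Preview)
Your reduction is correct: passing to the ratio sequence $u_i=\dd{a}{2}{i}/\dd{a}{1}{i}$ and using the dictionary $\dd{b}{2}{i}/\dd{a}{1}{i}=\alpha_1+\alpha_2 u_i$ converts the two-dimensional statement into exactly the one-dimensional Rosenthal lemma from the abstract. The paper does not give its own proof of this lemma---it is attributed to Rosenthal's thesis---but the paper's machinery (Lemma~\ref{l:stretch} specialized to $d=2$) would prove it by a quite different route: a Ramsey coloring of triples $\{i_1,i_2,i_3\}$ according to the sign of the cross-product vector $w_I$ and the sizes of the ratios $-w_k/w_{k+1}$, combined with the structural Lemma~\ref{l:p-lines}. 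Your approach is more elementary and works for all $q>1$ directly; the paper's approach buys uniformity in $d$, which is the point, since for $d\ge 3$ there is no single scalar ratio sequence to reduce to and one genuinely needs the determinant-based coloring.

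One small quibble: the parenthetical ``the all-negative case is symmetric'' glosses over a real issue. If every $\dd{a}{1}{i}<0$ on the surviving half, then no subsequence of $b=(\dd{a}{1}{},\dd{b}{2}{})$ can have positive first coordinate, so the lemma as literally stated is false on such input (take e.g.\ $a_i=(-1,i)^T$). This is a defect of the lemma's wording---Theorem~\ref{th:univ-sequen} allows a full lower-triangular $T$, hence in particular a sign on the first coordinate, which repairs it---rather than of your argument, but it should be flagged rather than absorbed into ``symmetric''.
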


The next lemma generalizes Rosenthal's and proves Theorem~\ref{th:univ-sequen} by induction.

\begin{lemma} \label{l:stretch}
Let $q>3$ be a real number and let $d>1$ be an integer. For every integer $n$ there exists $N = N(d,n,q)$ with the following property.
Let $a$ be a $d$-dimensional sequence in general position of length $N$ such that the $(d-1)$-dimensional sequence
$(\dd{a}{1}{}, \dots, \dd{a}{d-1}{})$ is $q$-pseudo-geometric.
Then there exists a linear combination of the $d$ coordinate sequences
$\dd{b}{d}{}=\sum_{i=1}^{d} \alpha_i \cdot \dd{a}{i}{}$ such that the $d$-dimensional sequence $b = (\dd{a}{1}{}, \dots, \dd{a}{d-1}{}, \dd{b}{d}{})$ has a $q$-pseudo-geometric 
subsequence of length $n$.
\end{lemma}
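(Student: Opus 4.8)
The plan is to start from the $(d-1)$-dimensional $q$-pseudo-geometric sequence $(\dd{a}{1}{},\dots,\dd{a}{d-1}{})$ and the "new" coordinate sequence $\dd{a}{d}{}$, pass to a long subsequence, and construct on it a linear combination $\dd{b}{d}{} = \sum_i \alpha_i \dd{a}{i}{}$ that, after restriction to a further subsequence, is $q$-pseudo-geometric together with the first $d-1$ coordinates. The controlling idea is that Lemma~\ref{l:p-lines} tells us exactly what the ratios $\dd{b}{d}{}/\dd{a}{t}{}$ look like once $\dd{b}{d}{}$ has been arranged to have $d-1$ well-separated zeros: between consecutive zeros $j_{t-1}$ and $j_t$ the combination behaves (up to a $(1-2q^{-D})$ factor) like its dominant term $\alpha_t\dd{a}{t}{i}$, so on that stretch the ratio $\dd{b}{d}{}/\dd{a}{s}{}$ is, within a bounded multiplicative error, $\pm(\alpha_t/\,\alpha_s)\cdot(\dd{a}{t}{}/\dd{a}{s}{})$, which is already monotone because the underlying sequence is ordered. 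Thus if we pick a subsequence on which the indices fall comfortably inside one block $(j_{t-1},j_t)$ and are spaced far apart (take every $k$th element, which multiplies all the $q$'s by $q^k$, as noted before Lemma~\ref{l:p-lines}), the ratios against the first $d-1$ coordinates will be genuinely $q$-increasing or $q$-decreasing.

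**Next**, the heart of the matter is producing $\dd{b}{d}{}$ with the prescribed sign-change pattern and controlling how $\dd{b}{d}{}$ compares to $\dd{a}{d}{}$ itself — we need the ratio $\dd{b}{d}{}/\dd{a}{d}{}$ (or its reciprocal) to be $q$-increasing on the chosen subsequence as well. Here I would induct on $d$ using the $d=2$ case (Rosenthal's lemma, stated just above) as the base: apply it, or rather its proof idea, to a cleverly chosen $2$-dimensional sub-problem. The natural move is to consider the $2$-dimensional sequence whose first coordinate is $\dd{a}{d-1}{}$ (the fastest-growing among the old coordinates, after ordering) and whose second coordinate is $\dd{a}{d}{}$; Rosenthal's lemma on a long enough subsequence gives $\alpha_{d-1}\dd{a}{d-1}{} + \alpha_d\dd{a}{d}{}$ with a $q$-pseudo-geometric pair. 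One then has to fold in the remaining old coordinates $\dd{a}{1}{},\dots,\dd{a}{d-2}{}$ — since they grow strictly slower than $\dd{a}{d-1}{}$, adding small multiples of them (or, more precisely, iterating the two-dimensional argument) creates the extra zeros $j_1 < \dots < j_{d-2}$ of $\dd{b}{d}{}$ while barely disturbing its behavior where it is large, which is precisely the regime Lemma~\ref{l:p-lines} governs. Ramsey-type pigeonholing (repeatedly extracting subsequences, each time shrinking $N$ by a function of the previous parameters and of $q$) lets us simultaneously demand: the zeros are separated ($j_t + 2 < j_{t+1}$, in fact by $\gg D$), the surviving subsequence lies strictly between two consecutive zeros, and it is sparse enough that error factors like $(1-2q^{-D})$ do not destroy the $q$-monotonicity.

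**The main obstacle** I expect is bookkeeping the error terms so that the $q$-pseudo-geometric condition survives for \emph{all} pairs $\{s,d\}$ with $s\le d-1$ at once, and simultaneously for the pair involving $\dd{b}{d}{}/\dd{a}{d}{}$. The difficulty is that Lemma~\ref{l:p-lines}'s estimate $(1-2q^{-D})|\alpha_t\dd{a}{t}{i}| < |b_i| < |\alpha_t\dd{a}{t}{i}|$ only pins $b_i$ down to a factor of roughly $(1-2q^{-D})^{\pm 1}$, so when we form the ratio of two consecutive values $|b_{i'}|/|b_i|$ we lose a factor like $(1-2q^{-D})^{-2}$; to still exceed $q$ we must have started with ratio at least $q\cdot(1-2q^{-D})^{-2}$, which forces us to thin the sequence (replace $q$ by $q^k$ for suitable $k$, then $D$ by $kD$) before applying the lemma — and this thinning interacts with the separation requirement on the zeros and with the requirement that the whole surviving subsequence sits inside a single block. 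Keeping these three tensioned constraints compatible, and verifying that a single final choice of $k$, $D$, and the intermediate lengths works, is the delicate part; everything else is repeated pigeonhole and the already-proven Lemma~\ref{l:p-lines}.
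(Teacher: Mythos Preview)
There is a genuine gap in how you invoke Lemma~\ref{l:p-lines}. That lemma requires the full $d$-dimensional sequence to be ordered and $q$-pseudo-geometric, but here only $(\dd{a}{1}{},\dots,\dd{a}{d-1}{})$ is; the coordinate $\dd{a}{d}{}$ is uncontrolled. So you cannot apply Lemma~\ref{l:p-lines} to a combination $\dd{b}{d}{}=\sum_{t=1}^d\alpha_t\,\dd{a}{t}{}$ that involves $\dd{a}{d}{}$. Even granting the conclusion, the ``dominant-term-per-block'' picture is too coarse for what you need: if on your chosen block $(j_{t-1},j_t)$ the dominant term is $\alpha_t\,\dd{a}{t}{}$ with $t\le d-1$, then $\dd{b}{d}{}/\dd{a}{t}{}$ is essentially the constant $\alpha_t$, so neither it nor its reciprocal is $q$-increasing --- the $q$-pseudo-geometric condition fails against $\dd{a}{t}{}$ itself. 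And if the dominant term were $\alpha_d\,\dd{a}{d}{}$ you would know nothing about $\dd{a}{d}{}/\dd{a}{s}{}$. Your Rosenthal step on $(\dd{a}{d-1}{},\dd{a}{d}{})$ controls only the ratio against $\dd{a}{d-1}{}$; when that ratio comes out $q$-decreasing you have no way to locate the combination relative to $\dd{a}{1}{},\dots,\dd{a}{d-2}{}$, and ``fold in the rest'' does not supply one.

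What is actually needed is a $\dd{b}{d}{}$ whose growth rate sits \emph{strictly between} those of two consecutive old coordinates $\dd{a}{k}{}$ and $\dd{a}{k+1}{}$ (or below $\dd{a}{1}{}$, or above $\dd{a}{d-1}{}$) --- not one that is approximated by a single $\alpha_t\,\dd{a}{t}{}$. The paper builds $d$ candidates $\dd{c}{0}{},\dots,\dd{c}{d-1}{}$, one per slot, each pinned down by prescribing $d-1$ zeros; the key identity is $\dd{c}{k}{i_{k+2}}/\dd{c}{k}{i_{k+1}}=-w_{k+1}/w_{k+2}$, expressing successive ratios of $\dd{c}{k}{}$ as ratios of $d\times d$ minors of the full point matrix. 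A Ramsey coloring of $(d{+}1)$-tuples records how this ratio compares to the corresponding ratios of $\dd{a}{k}{}$ and $\dd{a}{k+1}{}$. On a monochromatic set a contradiction argument (this is where Lemma~\ref{l:p-lines} enters, applied only to auxiliary combinations of the first $d-1$ coordinates with $d-2$ zeros) forces some $\dd{c}{k}{}$ into the ``sandwiched'' color, and that one, after thinning, is $\dd{b}{d}{}$. The error bookkeeping you flag as the main obstacle is in fact routine; the missing idea is this slot-finding mechanism via the cross product and the $(d{+}1)$-tuple Ramsey step.
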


\begin{proof}
	Choose $\delta$ such that
	\[
	\frac{(1+\delta)(\delta+2q^{-2})}{1-2q^{-2}}<1
	\]
	for example $\delta = \frac{1}{3}$.
First we define a coloring $\phi$ on the set $\binom{[n]}{d+1}$ as follows.
Let $1 \le i_1 < i_2 < \dots < i_{d+1} \le n$ be $d+1$ different numbers and let $I = \{ i_1, \dots, i_{d+1} \}$.
Let $A_I$ be the $d \times (d+1)$ matrix $A_I = [\dd{a}{t}{i_k}]$ $1\leq t \leq d$ and $1\leq k \leq d+1$. 
Let $w_I=(w_1, \dots, w_{d+1})$ be the $d+1$-dimensional cross product of the rows of $A_I$.
That is, $(-1)^k w_k$ is the determinant of the $d \times d$ matrix that we get by deleting the $k$th column of
$A_I$. The vector $w_I$ is also the unique vector (up to a constant factor) which is orthogonal to every row vector 
of $A_I$ (those are the coordinate sequences restricted to the set $I$).
Since our points are in general position none of the $w_k$ can be zero.
Let $\phi_+(I)$ be color $1$ if $w_1>0$ and $-1$ if $w_1<0$.
For all $0 \le k \leq d$ we look at the ratio $-\frac{w_{k}}{w_{k+1}}$ and if it is smaller then 
$(1+\delta)\frac{\dd{a}{k-1}{i_{k+1}}}{\dd{a}{k-1}{i_k}}$ 
we define $\phi_k(I)$ as $1$ (only for $k\geq 2$),
otherwise if it is larger then
$(1-\delta)\frac{\dd{a}{k}{i_{k+1}}}{\dd{a}{k}{i_k}}$ 
we define $\phi_k(I)$ as $2$ (only for $k\leq d-1$),
otherwise $\phi_k(I)=0$.
Here $\phi_1(I)$ cannot be $1$ and $\phi_d(I)$ cannot be $2$.

Finally let $\phi(I) = (\phi_+, \phi_1(I), \phi_2(I), \dots, \phi_d(I))$. So we use $2\cdot3^d$
colors to color all the $(d+1)$-tuples, and by Ramsey theory we find a subsequence of size $m$ which is monochromatic
if $N$ is large enough in terms of $d,m$ and $q$. Here $m$ will be specified soon. 
Since $\phi_+(I)$ is constant, therefore the determinant of the $d \times d$ matrix of 
any $d$ points has the same sign (after removing the first point). 
This also means that for any $I$ the coordinates of $w_I$ are alternating.
This implies that any linear combination $\sum_{t=1}^d \alpha_t \cdot \dd{a}{t}{}$ of the coordinate sequences
which has $d-1$ zeros has the following two properties.
All elements between two consecutive zeros have the same sign. The two elements adjacent to a zero have different sign (alternating).
That is two elements have the same sign exactly if there are an even number of zeros
between them.


In fact, there is a unique $k$ such that $\phi_1(I) = \dots = \phi_k(I) = 2$, $\phi_{k+1}(I) = 0$ and $\phi_{k+2}(I) = \dots = \phi_d(I) = 1$. But to finish the proof we only need that at least one of the $\phi_{k+1}(I)$ is $0$.

Let us define $d$ sequences $\dd{c}{0}{}, \dots, \dd{c}{d-1}{}$ as linear combinations of the sequences $\dd{a}{j}{}$.
For $0\leq k <d$ pick the first $k$ $3$-apart elements and the last $(d-1-k)$ $3$-apart elements of $\dd{c}{k}{}$ to be zero.
Then $\dd{c}{k}{}$ is well defined up to a constant factor so we can prescribe one more element.
We claim that one of these sequences will do as $\dd{b}{d}{}$ after deleting the zeros at the start and at the end,
then taking each $K$th element where $K=\lceil 3 \log_{1+\delta} q \rceil$.
The interesting part of the sequence $\dd{c}{k}{}$ is after the first $k$ zeros and before the last $d-1-k$ zeros (except for the first and last element)
Let $i_1, \dots, i_k$ be the position of the first $k$ zeros and $i_{k+3}, \dots, i_{d+1}$ be the position of the last $d-k-1$ zeros.
Let $i_{k+1}, i_{k+2}$ be such that $i_k+3\leq i_{k+1}\leq i_{k+2}-3 \leq i_{k+3}-6$.
Let
$I = \{ i_1, \dots, i_{d+1} \}$. Let $w_I = (w_1, \dots, w_{d+1} \}$ be the orthogonal vector to every coordinate sequence
as above. Then $w_I$ is orthogonal to $\dd{c}{k}{}$ which implies
$\frac{\dd{c}{k}{i_{k+2}}}{\dd{c}{k}{i_{k+1}}} = -\frac{w_{k+1}}{w_{k+2}}$.

If $\phi_{k+1}(I) = 0$ then 
\[
 (1+\delta)\frac{\dd{a}{k}{i_{k+2}}}{\dd{a}{k}{i_{k+1}}} < \frac{\dd{c}{k}{i_{k+2}}}{\dd{c}{k}{i_{k+1}}}
 < (1-\delta)\frac{\dd{a}{k+1}{i_{k+2}}}{\dd{a}{k+1}{i_{k+1}}}	
\]
and the above mentioned subsequence (taking every $K$th element) will work as $\dd{b}{d}{}$, if the length of the sequence $\dd{b}{d}{}$ is at least $n$. 
This is guaranteed by choosing $m=3d+Kn$ as one can see directly. We mention that in the last formula 
we do not have the left hand side inequality if $k=1$, and we do not have the right hand side one if $k=d-1$.

So we can assume that every $\phi_k(I)$ is either $1$ or $2$.
But $\phi_1(I)$ must be $2$ and $\phi_d(I)$ must be $1$, therefore
there exists a $k$ such that $\phi_{k+1}(I) = 2$ and
$\phi_{k+2}(I) = 1$. That is
\[
 (1-\delta) \frac{\dd{a}{k+1}{i_{k+2}}}{\dd{a}{k+1}{i_{k+1}}}<  
 \frac{\dd{c}{k}{i_{k+2}}}{\dd{c}{k}{i_{k+1}}}  \;\;\;\;\;\;\;\;
 \frac{\dd{c}{k+1}{i_{k+3}}}{\dd{c}{k+1}{i_{k+2}}}<
 (1+\delta)  \frac{\dd{a}{k+1}{i_{k+3}}}{\dd{a}{k+1}{i_{k+2}}}. 
\]

We show that this leads to a contradiction. Assume that $\dd{c}{k}{i_{k+1}}>0$ 
and let $-\dd{c}{k+1}{} = \dd{c}{k}{} + \sum_{t=1}^{d-1} \alpha_t \; \dd{a}{t}{}$ 
be such that it is zero at $i_1, \dots, i_k, i_{k+1}, i_{k+4}, \dots, i_{d+1}$.
This is unique as $\sum_{t=1}^{d-1} \alpha_t \cdot \dd{a}{t}{}$ has $(d-2)$ zeros at $i_1, \dots, i_k, i_{k+4}, \dots , i_{d+1}$
and is $-\dd{c}{k}{i_{k+1}}$ at the $i_{k+1}$th position.
By Lemma~\ref{l:p-lines} we know that 
$(1-2q^{-2}) |\alpha_{k+1} \cdot \dd{a}{k+1}{i_{k+1}}| < 
\dd{c}{k}{i_{k+1}} < 
|\alpha_{k+1} \cdot \dd{a}{k+1}{i_{k+1}}| $
and that
$(1-2q^{-2}) |\alpha_{k+1} \cdot \dd{a}{k+1}{i_{k+2}}| < |\sum_{t=1}^{d-1} \alpha_t \cdot \dd{a}{t}{i_{k+2}}| 
< |\alpha_{k+1} \cdot \dd{a}{k+1}{i_{k+2}}| $.

The change from $\dd{c}{k}{}$ to $-\dd{c}{k+1}{}$ is 
$\sum_{t=1}^{d-1} \alpha_t \cdot \dd{a}{t}{}$. So that sum is negative in the $i_{k+1}$th position, and since it has the same sign between $i_k$ and $i_{k+4}$th positions therefore $\dd{c}{k+1}{i_{k+3}}$ is positive. Furthermore
$\dd{c}{k+1}{i_{k+3}} > (1-2q^{-2}) |\alpha_{k+1} \cdot \dd{a}{k+1}{i_{k+3}}|$.
We also have 
$\dd{c}{k}{i} + (1-2q^{-2}) (\alpha_{k+1} \cdot \dd{a}{k+1}{i})>
\dd{c}{k}{i} + \sum_{t=1}^{d-1} \alpha_t \cdot \dd{a}{t}{i} = c'_i >0 $ 
and therefore
$\dd{c}{k}{i} >(1-2q^{-2}) |\alpha_{k+1} \cdot \dd{a}{k+1}{i}|$. So 
$\frac{\dd{a}{k+1}{i+3}}{(1-2q^{-2})\dd{a}{k+1}{i}} >
\frac{\dd{c}{k}{i+3}}{\dd{c}{k}{i}} = -\frac{w_{k+1}}{w_{k+2}}$.
So we have 
\begin{align*}
\dd{c}{k}{i_{k+1}} &= - \sum_{t=1}^{d-1} \alpha_t \cdot \dd{a}{t}{i_{k+1}}
>(1-2q^{-2}) |\alpha_{k+1} \cdot \dd{a}{k+1}{i_{k+1}}| \\
\dd{c}{k}{i_{k+2}} &> (1-\delta) 
\frac{\dd{a}{k+1}{i_{k+2}}}{\dd{a}{k+1}{i_{k+1}}} \dd{c}{k}{i_{k+1}} >
(1-\delta) (1-2q^{-2}) |\alpha_{k+1} \cdot \dd{a}{k+1}{i_{k+2}}| \\
|\sum_{t=1}^{d-1} \alpha_t \cdot \dd{a}{t}{i_{k+2}}|  & <|\alpha_{k+1} \cdot
\dd{a}{k+1}{i_{k+2}}|. 
\end{align*}
Consequently
\begin{align*}
\dd{c}{k+1}{i_{k+2}} & = -\dd{c}{k}{i_{k+2}} - 
\sum_{t=1}^{d-1} \alpha_t \cdot \dd{a}{t}{i_{k+2}} < (\delta+2q^{-2})
|\alpha_{k+1} \cdot \dd{a}{k+1}{i_{k+2}}| \\
\dd{c}{k+1}{i_{k+3}}  &<
(1+\delta)  \frac{\dd{a}{k+1}{i_{k+3}}}{\dd{a}{k+1}{i_{k+2}}}
\dd{c}{k+1}{i_{k+2}} < (1+\delta) (\delta+2q^{-2}) 
|\alpha_{k+1} \cdot \dd{a}{k+1}{i_{k+3}}|
\end{align*}
This contradicts 
$\dd{c}{k+1}{i_{k+3}} > (1-2q^{-2}) |\alpha_{k+1} \cdot \dd{a}{k+1}{i_{k+3}}|$.
\end{proof}

 We mention that $N=N(d,n,q)$ is, as expected from Ramsey theory, very large.

\section{Dominant $q$-increasing sequence}\label{sec:orderdominant}

Let $a$ be a $(d+1)$-dimensional ordered $q$-increasing sequence.
For every $i,j \in [n]$ and $t \in [d]$ define $f_a(t,i,j)$ as the increase of the fraction of the $(t+1)$st and $t$th sequence from $i$ to $j$, that is,
\[
 f_a(t,i,j) = \frac{    \frac{\dd{a}{t+1}{j}}{\dd{a}{t}{j}}         }{      \frac{\dd{a}{t+1}{i}}{\dd{a}{t}{i}}       } =
 \frac{\dd{a}{t+1}{j} \cdot \dd{a}{t}{i}}{\dd{a}{t}{j} \cdot \dd{a}{t+1}{i} }.
\]
We remark here that the sequence $a$ is ordered and $q$-increasing if and only if for every $i,t$ we have $f_a(t,i,i+1)>q$.
The following properties of the function $f$ are easy to establish.
For every $i,j,k \in [n]$ and $t\in [d]$ we have
\begin{align}
f_a(t,i,k) & = f_a(t,i,j) \cdot f_a(t, j,k) \label{e:prod} \\
f_a(t,i,j) &= f_a(t,i,i+1) \cdot f_a(t,i+1,j) > q \cdot f_a(t,i+1,j) > f_a(t,i+1,j) \label{e:left} \\
f_a(t,i,j) &= f_a(t,i,j-1) \cdot f_a(t,j-1,j) > q \cdot f_a(t,i,j-1) > f_a(t,i,j-1) \\
f_a(t,i,j) &\leq f_a(t,1,n)
\end{align}

We want to control the relation of the following two fractions with respect to the interval $[\frac{1}{q},q]$.
Since they are positive, they are either bigger then $q$, smaller than $\frac{1}{q}$ or between
$\frac{1}{q}$ and $q$.
The two fractions are for every $1\leq i<j<k\leq n$ and for distinct $s,t\in [d]$
\[
 \frac{f_a(t,i,j)}{f_a(s,j,k)} \;\;\;\;\;\;\;\;\;\;\;\; \textrm{ and } \;\;\;\;\;\;\;\;\;\;\;\;
 \frac{f_a(t,j,k)}{f_a(s,i,j)}.
\]

We say that $a$ is {\em left-dominant} if for distinct $s,t\in [d]$ the relation of the first fraction
to the interval $[\frac{1}{q}, q]$ is the same independently of the choice of $i,j,k$.
Observe that if $n\ge 5$ then it can not be inside the interval since by equation (\ref{e:left})
\[
 \frac{f_a(t,1,4)}{f_a(s,4,5)} > q^2 \cdot
 \frac{f_a(t,3,4)}{f_a(s,4,5)}
\]
Similarly, we say that $a$ is {\em right-dominant} if for distinct s,$t\in [d]$
the relation of the second fraction to the interval $[\frac{1}{q}, q]$ is the same independently of the choice of $i,j,k$.
Observe again that if $n\ge 5$ then it can not be inside the interval since
\[
 \frac{f_a(t,2,5)}{f_a(s,1,2)} > q^2 \cdot
 \frac{f_a(t,2,3)}{f_a(s,1,2)}
\]
We say that $a$ is  {\em dominant} if $a$ is both left-dominant and  right-dominant.

\begin{lemma}\label{l:dom}
Let $N>2^{2^{cn}}$ where $c=3^{2d(d-1)}$.
Let $a$ be a $(d+1)$-dimensional ordered $q$-increasing sequence of length $N$.
Then $a$ has a subsequence of length at least $n$ which is dominant.
\end{lemma}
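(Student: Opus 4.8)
The plan is to prove Lemma~\ref{l:dom} by a double application of Ramsey's theorem on $4$-tuples, one to secure left-dominance and one to secure right-dominance, with a careful bookkeeping of how many Ramsey layers are needed so that the final bound $N > 2^{2^{cn}}$ comes out.

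First I would fix notation. For a $(d+1)$-dimensional ordered $q$-increasing sequence $a$ and an ordered quadruple $i<j<k$ together with a choice of distinct $s,t \in [d]$, define the ``left indicator'' $L_{s,t}(i,j,k) \in \{-1,0,+1\}$ according to whether $f_a(t,i,j)/f_a(s,j,k)$ is smaller than $1/q$, inside $[1/q,q]$, or larger than $q$; similarly define a ``right indicator'' $R_{s,t}(i,j,k)$ from the second fraction. Since there are $d(d-1)$ ordered pairs $(s,t)$, and for each we record one of three values for the left and one for the right, I would package all of this into a single coloring $\psi$ of $\binom{[n]}{3}$ — wait, more precisely of the ordered triples $i<j<k$, i.e. of $\binom{[n]}{3}$ — using $3^{2d(d-1)}$ colors. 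Apply Ramsey's theorem for triples: from a sequence of length $N$ we extract a monochromatic subsequence of length roughly $\log_* $-many iterated logarithms smaller, and the classical bound for Ramsey numbers of triples on $C$ colors gives a monochromatic set of size $n$ provided $N$ exceeds a tower of height bounded in terms of $n$ and $C = 3^{2d(d-1)}$; this is exactly where $c = 3^{2d(d-1)}$ enters as the base of the double exponential, matching the stated hypothesis $N > 2^{2^{cn}}$.

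On the monochromatic subsequence $b$ of length $n$, every left indicator $L_{s,t}$ and every right indicator $R_{s,t}$ is constant over all choices of $i<j<k$ in $b$. It remains to observe that none of these constant values can be $0$, i.e. the constant relation cannot be ``inside $[1/q,q]$''. This is precisely the content of the two displayed inequalities just before the lemma: if $n \ge 5$ then, taking $i,j,k$ to be the first few indices of $b$, equation (\ref{e:left}) forces $f_b(t,1,4)/f_b(s,4,5) > q^2 f_b(t,3,4)/f_b(s,4,5)$, and since the second of these two ratios is itself either inside or outside $[1/q,q]$ depending on the same constant color, the factor $q^2$ rules out ``both inside''; a symmetric computation with $f_b(t,2,5)/f_b(s,1,2) > q^2 f_b(t,2,3)/f_b(s,1,2)$ handles the right indicators. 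Hence on $b$ each $L_{s,t}$ and each $R_{s,t}$ is a constant in $\{-1,+1\}$, which is exactly the definition of $b$ being both left-dominant and right-dominant, i.e. dominant. Since $n \ge 5$ is needed only in this last step, and the lemma is only used for large $n$, I would either assume $n\ge 5$ or note that the statement is vacuous/trivial for small $n$.

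The main obstacle I anticipate is purely quantitative: making the Ramsey bound line up with $N > 2^{2^{cn}}$ with $c = 3^{2d(d-1)}$. The hypergraph Ramsey number $R_3(n;C)$ for $C$ colors on triples is known to be at most a tower of height $2$ in an exponent that is polynomial (indeed, essentially $C\cdot n$-sized) — so one gets a bound of the shape $2^{2^{O(Cn)}}$; I would need to check that the implied constants and the precise form of the Erd\H os–Rado stepping-up argument are absorbed into replacing $O(Cn)$ by $cn$ with $c = C = 3^{2d(d-1)}$ (perhaps after trivially enlarging the color count bound $3^{2d(d-1)} \le 3^{2d(d-1)}$ and using $n$ large). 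The combinatorial heart — that constancy of the indicators plus the $q^2$-gap inequalities forces dominance — is short and is already essentially written out in the lead-in to the lemma; everything else is Ramsey bookkeeping.
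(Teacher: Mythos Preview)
Your proposal is correct and follows essentially the same approach as the paper: color triples $i<j<k$ by the pair of indicator vectors (one for the left fraction, one for the right) taking values in $\{-,0,+\}^{d(d-1)}$ each, apply Ramsey for $3$-uniform hypergraphs with $3^{2d(d-1)}$ colors, and then use the $q^2$-gap inequalities (with $n\ge 5$) to exclude the ``inside'' value. The opening line about a ``double application'' on ``$4$-tuples'' is a slip that you immediately correct; the argument you actually carry out --- a single Ramsey application on triples with the combined coloring --- is exactly what the paper does.
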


\begin{proof}
For each
$1 \leq i<j<k\leq N$ we color the triple $(i,j,k)$ with two color-vectors of length
$d(d-1)$, that is the number of ordered pairs of $s,t \in [d]$ ($s\ne t$).
The coordinates of the first color vector are $0,1$ or $2$ with respect to the fraction
\[
 \frac{f_a(t,i,j)}{f_a(s,j,k)}
\]
being smaller than $\frac{1}{q}$, between $\frac{1}{q}$ and $q$ or larger than $q$.
Similarly the coordinates of the second color vector are $0,1$ or $2$ with respect to the fraction
\[
 \frac{f_a(t,j,k)}{f_a(s,i,j)}
\]
being smaller than $\frac{1}{q}$, between $\frac{1}{q}$ and $q$ or larger than $q$.
By Ramsey theory we get a monochromatic subsequence of length $n$.
As observed before if $n\ge 5$ we cannot have the color $1$ appear which corresponds to the fraction
being inside the interval $[\frac{1}{q}, q]$.
\end{proof}

From now on we refer to a sequence $a$ as {\em dominant} if it is ordered $q$-increasing and dominant.
Let $a$ be a $(d+1)$-dimensional dominant sequence of length $n$.
For distinct $s,t \in [d]$ either $f_a(t,i,j)$ or $f_a(s,j,k)$ is larger by a factor of $q$
than the other
independently of the choice of $i<j<k$.
Similarly either $f_a(t,j,k)$ or $f_a(s,i,j)$ is larger by a factor of $q$ than the other.
There are four possibilities: The larger value in both cases is the one with $t$, or the one with $s$,
or the one with $i,j$ or the one with $j,k$.

We define four relations $\prec, \sim_r, \sim_l$ and $\sim$ on the set $[d]$ as follows.  \\
For distinct $s,t \in [d]$ let
\begin{itemize}
\item Let $t \prec s$ if for every $i<j<k$
$$ f_a(t,j,k) \cdot q < f_a(s,i,j)  \;\;\;\; \textrm{and} \;\;\;\;\;
 f_a(t,i,j) \cdot q <  f_a(s,j,k) $$
\item Let $t \sim_l s$ and $s \sim_l t$ if for every $i<j<k$
\[
 f_a(t,i,j) > q \cdot f_a(s,j,k)  \;\;\;\; \textrm{and} \;\;\;\;\;
 f_a(s,i,j) > q \cdot  f_a(t,j,k)
\]
\item Let $t \sim_r s$ and $s \sim_r t$ if for every $i<j<k$
\[
 f_a(t,j,k) > q \cdot f_a(s,i,j)  \;\;\;\; \textrm{and} \;\;\;\;\;
 f_a(s,j,k) > q \cdot  f_a(s,i,j)
\]
\end{itemize}
Furthermore let $t \sim s$ if $t \sim_l s$ or $t \sim_r s$ and define $t \sim t$ for every $t$.
Observe that if $t\neq s$ and $t \sim s$ than either $t \sim_r s$ or $t \sim_l s$.

\begin{lemma}\label{l:equiv}
If $n>3$ the relation $\sim$ is an equivalence relation.
In each equivalence class either all elements are right-similar ($\sim_r$) or
all elements are left-similar ($\sim_l$) with each other.
The relation $\prec$ is a total order on the equivalence classes.
\end{lemma}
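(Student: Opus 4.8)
The plan is to reduce the lemma's three assertions to a $4$-valued invariant of each pair of coordinates together with a short composition table, and then assemble.

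\emph{Step 1 (classification).} Fix distinct $s,t\in[d]$. Since $a$ is dominant, each of the two quantities $f_a(t,i,j)/f_a(s,j,k)$ and $f_a(t,j,k)/f_a(s,i,j)$ is, for \emph{every} $i<j<k$, either $>q$ or $<1/q$ --- these are the ``four possibilities'' recorded just before the lemma. Call the first dichotomy $A$ vs.\ $A'$ and the second $B$ vs.\ $B'$. Unwinding the definitions one gets $A\wedge B\iff s\prec t$, $A'\wedge B'\iff t\prec s$, $A\wedge B'\iff t\sim_l s$ and $A'\wedge B\iff t\sim_r s$. Hence $\prec,\sim_l,\sim_r$ partition the unordered pairs of $[d]$: $\prec$ is irreflexive and asymmetric, any two distinct coordinates lie in exactly one of these relations, and $\sim_l,\sim_r$ (hence $\sim$) are symmetric. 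I would also record the \emph{extremal instances} of each relation --- for $\sim_l$ put $i=j-1$ and $k=n$ in both clauses, for $\sim_r$ put $i=1$ and $k=j+1$, for $\prec$ a mixture --- which replace ``for all $i<j<k$'' by a short list of inequalities among the consecutive ratios $f_a(t,\ell,\ell+1)$; that is the form in which I would do the arithmetic below.

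\emph{Step 2 (composition).} For distinct $t,u,v$ I want to constrain the relation of $t$ to $v$ from the relations of $t$ to $u$ and of $u$ to $v$. The decisive entry is that \emph{mixed} similarity is impossible: if $t\sim_l u$ and $u\sim_r v$, then chaining the first clauses gives $f_a(t,i,j)>q\,f_a(u,j,k)>q^{2}f_a(v,i,j)$ for all $i<j\le n-1$, and chaining the second clauses gives $f_a(v,j,k)>q\,f_a(u,i,j)>q^{2}f_a(t,j,k)$ for all $2\le j<k$; since $n>3$, applying the first to $(i,j)=(2,3)$ and the second to $(j,k)=(2,3)$ yields $f_a(t,2,3)>q^{2}f_a(v,2,3)>q^{4}f_a(t,2,3)$, a contradiction. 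The remaining ``pure'' entries I need are $\sim_l\circ\sim_l\Rightarrow\sim_l$ and $\sim_r\circ\sim_r\Rightarrow\sim_r$; $\prec\circ\prec\Rightarrow\prec$; and $\sim\circ\prec\Rightarrow\prec$, $\prec\circ\sim\Rightarrow\prec$. Each should follow by the same recipe: combine the defining inequalities of the two inputs, insert an auxiliary index when one is available, and use $f_a(t,i,k)=f_a(t,i,j)\,f_a(t,j,k)$ together with $f_a(t,i,i+1)>q$ to pay for the powers of $q$; then recognize the resulting inequalities as the definition of the claimed relation.

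\emph{Step 3 (assembly).} $\sim$ is reflexive and symmetric by Step 1. If $t\sim u\sim v$ with $t,u,v$ distinct, then either the two similarities have the same handedness and $t\sim v$ by the pure entries, or they have opposite handedness, which Step 2 rules out; so $\sim$ is an equivalence relation. If some class contained a $\sim_l$-pair and a $\sim_r$-pair, then by transitivity of $\sim$ it would contain two such pairs sharing a coordinate, again contradicting the impossibility of mixed similarity; so each class is uniformly $\sim_l$ or uniformly $\sim_r$. Finally $\prec$ is irreflexive and asymmetric and, by Step 1, compares any two distinct classes; the entries $\sim\circ\prec\Rightarrow\prec$ and $\prec\circ\sim\Rightarrow\prec$ make $\prec$ well defined on classes, and $\prec\circ\prec\Rightarrow\prec$ gives transitivity --- so $\prec$ is a strict total order on the equivalence classes.

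I expect the main obstacle to be the pure entry $\sim_l\circ\sim_l\Rightarrow\sim_l$ (and its mirror $\sim_r\circ\sim_r\Rightarrow\sim_r$): unlike the mixed case, which dies in two lines, the four defining inequalities do not chain shut without care --- after one auxiliary insertion one is typically left needing an estimate like $f_a(t,i,j)\ge f_a(u,j-1,j)$, which holds only after also feeding in the companion clause and the $q$-increasing bound, and the boundary configurations where no auxiliary index sits in the required slot (for the first clause of $\sim_l$ this is $j=i+1$) have to be handled separately through the second clause. Passing to the extremal forms of Step 1 and doing the bookkeeping there is how I would organize this; I also expect $\prec\circ\prec\Rightarrow\prec$ to need the same boundary attention, though to be otherwise more robust.
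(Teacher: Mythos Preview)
Your architecture matches the paper's: classify each unordered pair into one of the four types, rule out mixed similarity, and assemble transitivity and well-definedness from a small composition table. Your mixed-similarity contradiction is essentially the paper's.

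The difficulty you anticipate with the pure entries is not real, and the reason is a point you stated in Step~1 but did not fully exploit. Dominance means the two dichotomies for the pair $(t,v)$ are decided \emph{globally}: to establish $t\sim_l v$ (or $t\prec v$, etc.) it suffices to exhibit a \emph{single} triple $i<j<k$ at which both defining inequalities hold. You are trying to verify them for every triple, and that is what manufactures the boundary cases. The paper simply works with indices drawn from $\{1,2,3,4\}$ (this is exactly why the hypothesis is $n>3$). For example, from $t\sim_r s$ at $(1,2,3)$ and $s\sim_r v$ at $(2,3,4)$ one chains
\[
q\,f_a(t,1,2)<f_a(s,2,3)<q\,f_a(s,2,3)<f_a(v,3,4)<f_a(v,2,4),
\]
which is one clause of $t\sim_r v$ at the single triple $(1,2,4)$; the other clause is symmetric in $t,v$. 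The entries $\sim_l\circ\sim_l$ and $\prec\circ\prec$ go through by the same two-line chain with appropriate indices; no extremal-form bookkeeping or boundary analysis is needed.

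For well-definedness of $\prec$ on classes the paper takes a shortcut you might prefer to a direct computation of $\sim\circ\prec\Rightarrow\prec$: once $\sim$ is an equivalence relation and $\prec$ is transitive, if $t\prec s\sim v$ then $t\sim v$ would force $t\sim s$, and $v\prec t$ would force $v\prec s$; elimination leaves $t\prec v$.
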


\begin{proof}
During the proof $i<j<k$ will be three of the four numbers $1,2,3,4$. \\
First we show that $\sim$ is an equivalence relation. It is obviously reflexive and symmetric, so we need to show that it is transitive.
Let $t \sim s \sim v$ and assume that $t \sim_r s$ is right similar, which means that for every $i<j<k$
$$ q f_a(t, i,j) < f_a(s, j, k) \mbox{ and } q f_a(s, i,j) < f_a(t, j, k).$$
We claim that $s \sim_r v$. Assume on the contrary that $s \sim_l v$. Then, using (\ref{e:prod}) and  (\ref{e:left}),
$$ qf_a(s,1,2) < f_a(t,2,3)< qf_a(t,2,3) < f_a(s,3,4)<
 qf_a(s,3,4) < f_a(v,2,3) $$
which is a contradiction.
The relation $t \sim_r v$ follows from
\[
 q \cdot f_a(t,1,2) < f_a(s,2,3) <  q\cdot f_a(s,2,3) <  f_a(v,3,4) <  f_a(v,2,4)
\]
and
\[
 q \cdot f_a(v,1,2) < f_a(s,2,3) <  q\cdot f_a(s,2,3) <  f_a(t,3,4) <  f_a(t,2,4)
\]
again by using (\ref{e:prod}) and  (\ref{e:left}). So $\sim$ is transitive, moreover if two elements are
right-similar in an equivalence class then all pairs are right-similar in that equivalence class.

Now we show that $\prec$ is transitive.
If $t \prec s \prec v$ then
$$q \cdot f_a(t,3,4)<f_a(t,2,4)< q^2 \cdot f_a(t,2,4)< q f_a(s,1 ,2 )
< f_a(v, 2, 3) $$
and
\[
q \cdot f_a(t,1,2)<f_a(t,1,3)< q^2 \cdot f_a(t,1,3)< q f_a(s,3 ,4 )
< f_a(v, 2, 3)
\]
which shows that $\prec$ is transitive.

Finally we show that $\prec$ is well-defined 
on the equivalence classes.
Let $t \prec s \sim v$. Since $t \sim v$ implies $t \sim s$ therefore $t$ and $v$ cannot be similar. $v \prec t$ would imply $v \prec s$ therefore the relation of $t$ and $v$ must be $t \prec v$.
Similarly $t \sim s \prec v$ implies $t \prec v$.
\end{proof}

Let $t,s \in [d]$ be different. Let $t \vdash s$ if $t \prec s$ or $t \sim_r s$
and $t<s$ or $t \sim_l s$ and $t>s$. The relation $\vdash$ extends $\prec$ to a total order on $[d]$ such that the left similar equivalence classes are ordered decreasingly and the right similar equivalent classes are ordered increasingly.

The following lemma is crucial for the proof of Theorem~\ref{th:univ-tverberg}

\begin{lemma}\label{l:crux}
Let $a$ be a dominant (ordered and $q$-increasing) $(d+1)$-dimensional sequence of length $n$. Assume that $S$ is a non-empty subset of $[d]$,
$\tau \in S$ is the $\vdash$-maximal element in $S$  and for
every $s \in S$ integers $i_s,j_s\in [n]$ are given that satisfy the conditions
\begin{itemize}
\item $|i_\tau-j_\tau|\ge |S|$,
\item$i_s \le i_t$ and $j_s\le j_t$ if $s <t$ and $s,t \in S$,
\end{itemize}
Then
\[
\prod_{s \in S}f_a(s,i_s,j_s) > q \mbox{ if } i_{\tau} <j_{\tau} \mbox{ and }  <\frac 1q \mbox{ if } i_{\tau} > j_{\tau}.
\]
\end{lemma}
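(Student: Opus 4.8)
The plan is to prove Lemma~\ref{l:crux} by induction on $|S|$, using Lemma~\ref{l:equiv} (the structure of $\sim$-classes and the total order $\prec$/$\vdash$) together with the submultiplicativity/monotonicity identities \eqref{e:prod}--\eqref{e:left} for $f_a$. The base case $|S|=1$ is precisely the remark that $a$ being ordered $q$-increasing means $f_a(s,i,i+1)>q$ for all $i,s$, combined with \eqref{e:prod}: since $|i_\tau-j_\tau|\ge 1$, the single factor $f_a(\tau,i_\tau,j_\tau)$ is a product of at least one factor each exceeding $q$ (if $i_\tau<j_\tau$) or each less than $1/q$ (if $i_\tau>j_\tau$), so it lies on the correct side of the interval.

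\textbf{Inductive step.} For $|S|\ge 2$ I would single out the $\vdash$-maximal element $\tau\in S$ and set $S'=S\setminus\{\tau\}$. The factor $f_a(\tau,i_\tau,j_\tau)$ is the ``dominant'' one and wants to decide the sign of the whole product; the job is to show the remaining $|S'|$ factors cannot overturn it. By the hypothesis $|i_\tau-j_\tau|\ge|S|$ we have a gap of at least $|S|$ between $i_\tau$ and $j_\tau$, which (by \eqref{e:left} and its mirror) buys a factor of at least $q^{|S|}$ of ``room'' in $f_a(\tau,i_\tau,j_\tau)$ relative to $f_a(\tau,i'_\tau,j'_\tau)$ for any sub-gap of length one. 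The monotonicity conditions $i_s\le i_t,\ j_s\le j_t$ for $s<t$, together with the definition of $\vdash$, force the other indices to be ``between'' $i_\tau$ and $j_\tau$ in a way that lets me compare, for each $s\in S'$, the factor $f_a(s,i_s,j_s)$ against the appropriate piece of $f_a(\tau,\cdot,\cdot)$: concretely, for $s$ in a different $\sim$-class than $\tau$, dominance (the $\prec$ relations in Lemma~\ref{l:equiv}) gives that $f_a(\tau,\cdot,\cdot)$ beats $f_a(s,\cdot,\cdot)$ by a factor $q$ on overlapping index intervals, while for $s$ in the same class as $\tau$ the left/right-similarity (and the way $\vdash$ orders within a class) ensures $\tau$ is on the far side, so again the comparison goes the right way. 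Summing these comparisons over $s\in S'$ and using that the reserved room $q^{|S|-1}$ (from shrinking the $\tau$-gap by $|S'|=|S|-1$) dominates the product of the $|S'|$ subunit factors, I conclude $\prod_{s\in S}f_a(s,i_s,j_s)>q$ when $i_\tau<j_\tau$, and the symmetric argument (reverse every inequality, factors below $1/q$) handles $i_\tau>j_\tau$.

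\textbf{Main obstacle.} The delicate point is the bookkeeping of \emph{which index intervals overlap and in which direction}, because $\tau$ being $\vdash$-maximal splits into the case $\tau\prec$-maximal, $\tau$ in a right-similar class (largest index), or $\tau$ in a left-similar class (smallest index in its class but the whole class sits at the top of $\vdash$); in each case the ordering constraints on $\{i_s\},\{j_s\}$ interact differently with the ``$j,k$-side'' versus ``$i,j$-side'' dichotomy built into the definitions of $\prec,\sim_l,\sim_r$. One has to peel off $\tau$ correctly — I expect it is cleanest to remove $\tau$ and apply induction to $S'$ with a slightly shortened $\tau'$-gap for the new maximal element $\tau'$, checking that $|i_{\tau'}-j_{\tau'}|\ge|S'|$ still holds (which follows from $|i_\tau-j_\tau|\ge|S|$ and the nesting $i_{\tau'}\le i_\tau$, $j_{\tau'}\le j_\tau$, or their reversals), and then absorbing the single extra factor $f_a(\tau,i_\tau,j_\tau)/(\text{something})$ using the one unit of reserved $q$. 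The estimates \eqref{e:prod}--\eqref{e:left} are exactly what make each such absorption go through, so no genuinely new inequality is needed — only careful case analysis driven by Lemma~\ref{l:equiv}.
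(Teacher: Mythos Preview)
Your inductive scheme has a real gap at precisely the point you flag as a ``check'': after removing $\tau$ and passing to $S'=S\setminus\{\tau\}$ with new $\vdash$-maximal element $\tau'$, you claim $|i_{\tau'}-j_{\tau'}|\ge |S'|$ follows from $|i_\tau-j_\tau|\ge|S|$ together with the nesting $i_{\tau'}\le i_\tau$, $j_{\tau'}\le j_\tau$. It does not. Both $i_{\tau'}$ and $j_{\tau'}$ shift in the \emph{same} direction relative to $i_\tau,j_\tau$, so the gap $|i_{\tau'}-j_{\tau'}|$ is completely uncontrolled; it can even be $0$. (Take $|S|=2$, $i_\tau=1$, $j_\tau=3$, $i_{\tau'}=j_{\tau'}=1$.) So the inductive hypothesis is simply unavailable for $S'$, and the argument collapses. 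The gap hypothesis in the lemma concerns \emph{only} $\tau$, and there is no mechanism to propagate it to any other element of $S$.

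The paper's proof sidesteps this entirely by working directly, with no induction. After reducing to $i_\tau<j_\tau$ and to the worst case $i_s>j_s$ for $s\ne\tau$, it proves the pointwise claim that for every $s\ne\tau$ and every $i$ with $i_\tau\le i<j_\tau$,
\[
q\,f_a(s,j_s,i_s)<f_a(\tau,i,i+1),
\]
by a two-case analysis ($s<\tau$ versus $s>\tau$) using the monotonicity of the $i_s,j_s$ to position $[j_s,i_s]$ on the correct side of $i$ and then invoking the defining inequalities of $\prec$, $\sim_r$, $\sim_l$ directly. The point is that the $\tau$-gap of length $\ge|S|$ supplies at least $|S|$ unit steps $f_a(\tau,i,i+1)$, so $|S|-1$ of them absorb the $|S|-1$ nuisance factors and the leftover step(s) yield the residual factor $>q$. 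This is exactly the ``reserved room'' idea you describe in your second paragraph; you had the right heuristic but routed it through an induction that cannot be set up. If you drop the induction and instead prove the single-step domination inequality above, your sketch becomes the paper's proof.
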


\begin{proof}
Observe first that we can assume that $i_\tau < j_\tau$, since switching
each pair $i_s, j_s$ changes the product into its reciprocal.
Further observe that we can assume $i_s > j_s$ for every $s \neq \tau$ since
those are the terms in the product, that are smaller than $1$.

We claim that if $s \neq \tau$ then for any $i_\tau \leq i < j_\tau$
\[
1 < qf_a(s,j_s,i_s) < f_a(\tau, i, i+1)
\] 
{\bf Case 1} when $s < \tau$. 
Then $j_s < i_s \leq i_\tau \leq i$. 
Since $s\vdash \tau$ therefore $s \sim_l \tau$ is not possible.
Therefore $s \prec \tau$ or $s\sim_r \tau$ and 
\[
qf_a(s,j_s,i_s) \leq qf_a(s,j_s,i) < f_a(\tau, i, i+1) \enspace.
\]
{\bf Case 2} when $s > \tau$. 
Then $i+1\leq j_\tau \leq j_s < i_s$. 
Since $s\vdash \tau$ therefore $s \sim_r \tau$ is not possible.
Therefore $s \prec \tau$ or $s\sim_l \tau$ and 
\[
qf_a(s,j_s,i_s) \leq qf_a(s,i+1,i_s) < f_a(\tau, i, i+1) \enspace.
\]
Multiplying $|S|-1$ of these inequalities together we get
\[
q\prod_{\tau \neq s \in S} f_a(s,j_s,i_s) < \prod_{i=i_\tau}^{j_\tau-1} f_a(\tau, i, i+1) = f_a(\tau, i_\tau, j_\tau)
\]
\end{proof}

A sequence $a=(a_1,\ldots,a_n)$ will be called {\em super-dominant} if there is a dominant (and then ordered and $q$-increasing) sequence $b_1,b_2,\ldots,b_{(d+1)n}$ of $(d+1)$-dimensional vectors so that $a_i=b_{i(d+1)}$ for every $i\in [n]$. We will use the following corollary to Lemma~\ref{l:crux} in the proof of Theorem~\ref{th:univ-tverberg}.

\begin{corollary}\label{cor:crux}
Let $a$ be a super-dominant $(d+1)$-dimensional sequence of length $n$ and let $s,t \in [d]$ satisfy $s<t$. Let $i_s\le i_{s+1}\le \ldots \le i_t$ and $j_s \le j_{s+1} \le \ldots \le j_t$ be integers in $[n]$. 
Let $\tau$ be the $\vdash$-maximal element in $\{s,s+1,\ldots,t\}$. Then
\[
\prod_{u=s}^t f_a(u,i_u,j_u) > q \mbox{ if } i_{\tau} <j_{\tau} \mbox{ and }  <\frac 1q \mbox{ if } i_{\tau} > j_{\tau}.
\]
\end{corollary}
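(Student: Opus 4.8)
The plan is to derive Corollary~\ref{cor:crux} from Lemma~\ref{l:crux} by applying the latter to the dominant $(d+1)$-dimensional sequence $b$ of length $(d+1)n$ that witnesses the super-dominance of $a$, with $S=\{s,s+1,\ldots,t\}$. The one thing that must be checked is that the two bulleted hypotheses of Lemma~\ref{l:crux} can be met after we pass from indices in $[n]$ (for $a$) to indices in $[(d+1)n]$ (for $b$). The second bullet --- monotonicity $i_u\le i_v$, $j_u\le j_v$ for $u<v$ in $S$ --- transfers immediately, since the given integers $i_s\le\cdots\le i_t$ and $j_s\le\cdots\le j_t$ in $[n]$ can be reused verbatim (or scaled by $d+1$) and $f_a(u,i_u,j_u)=f_b(u,i_u(d+1),j_u(d+1))$ because $a_i=b_{i(d+1)}$. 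The first bullet, $|i_\tau-j_\tau|\ge |S|$, is the only place where genuine room is needed, and this is exactly why super-dominance builds in a length-$(d+1)$ buffer.

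Here is how I would handle the buffer. Set $\tau$ to be the $\vdash$-maximal element of $S$ as in the statement. If $i_\tau=j_\tau$ then $f_a(\tau,i_\tau,j_\tau)=1$; I would argue separately (or simply note) that in this degenerate case one can perturb by replacing the common value by two adjacent values in $[(d+1)n]$ that still respect the monotonicity chain, using the spare factor-of-$(d+1)$ gaps between consecutive images $b_{i(d+1)},b_{(i+1)(d+1)}$; actually the cleaner route is: if $i_\tau=j_\tau$, we cannot be in the situation of the corollary's conclusion at all unless we first decide which inequality we are proving, so I would instead assume WLOG (as in the proof of Lemma~\ref{l:crux}) that $i_\tau<j_\tau$, reducing to that case by taking reciprocals. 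Once $i_\tau<j_\tau$ in $[n]$, pass to $b$-indices $I_u=i_u(d+1)$, $J_u=j_u(d+1)$; then $J_\tau-I_\tau=(d+1)(j_\tau-i_\tau)\ge d+1\ge |S|$, so the first bullet of Lemma~\ref{l:crux} holds in $b$. The second bullet holds since $I_u\le I_v$ and $J_u\le J_v$ for $u<v$. Apply Lemma~\ref{l:crux} to $b$ and $S$ to conclude $\prod_{u\in S}f_b(u,I_u,J_u)>q$, and translate back via $f_b(u,I_u,J_u)=f_a(u,i_u,j_u)$ to get $\prod_{u=s}^t f_a(u,i_u,j_u)>q$. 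The case $i_\tau>j_\tau$ follows by the same reciprocal symmetry used at the start of the proof of Lemma~\ref{l:crux}.

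The main obstacle --- really the only subtlety --- is the edge case $i_\tau=j_\tau$, where the conclusion's dichotomy ``$>q$ or $<1/q$'' is vacuous/ill-posed; I expect the paper either tacitly excludes it or observes that the corollary is only invoked with $i_\tau\ne j_\tau$. Apart from that, the proof is a bookkeeping translation: the role of super-dominance is precisely to manufacture the $|i_\tau-j_\tau|\ge|S|$ slack that Lemma~\ref{l:crux} demands, at the modest cost of working inside a sequence that is $(d+1)$ times longer, and since $|S|=t-s+1\le d$ while the buffer between consecutive sampled terms is $d+1$, the slack is always sufficient.
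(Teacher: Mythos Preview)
Your argument is correct and is exactly the approach the paper takes: the paper's proof is the single sentence ``The three conditions in Lemma~\ref{l:crux} are satisfied because $a$ is super-dominant,'' and you have spelled out precisely why --- pass to the witnessing dominant sequence $b$, replace $i_u,j_u$ by $I_u=(d+1)i_u$, $J_u=(d+1)j_u$, note $f_a(u,i_u,j_u)=f_b(u,I_u,J_u)$, and observe that $|I_\tau-J_\tau|\ge d+1\ge |S|$ since $|S|=t-s+1\le d$. Your remark on the $i_\tau=j_\tau$ edge case is also apt; the paper leaves it tacit, and indeed the corollary is only ever invoked (in Lemma~\ref{l:switch}) with $i_\tau\ne j_\tau$.
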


\begin{proof} The three conditions in Lemma~\ref{l:crux} are satisfied because $a$ is super-dominant.
\end{proof}

\section{Tverberg partitions and the $G_w$ matrices}\label{sec:tverberg}

The next definition is from \cite{PerSi} and establishes a stronger property than
general position. The $d$-dimensional sequence $p_1,\ldots,p_n$ is in {\em strong general position} if
for any collection of $r$ pairwise disjoint subsets $A_1, \dots, A_r \subset [n]$
the affine hulls $H_m = \aff \{ p_i \; | i \in A_m \}$ intersect in such a way that their co-dimensions add up, that is
\[
d -\dim( \bigcap_{m=1}^r H_m) = \min (d+1, \sum_{m=1}^r (d-\dim(H_m))).
\]
It follows, in particular, that for a non-proper partition $A_1,\ldots,A_r$ of $[n]$ where $n=(d+1)(r-1)+1$, and with $P_m=\{p_i:i\in A_m\}$, the intersection of the affine hulls of $P_1,\ldots,P_m$ is empty. On the other hand, strong general position implies that for $n=(d+1)(r-1)+1$ and for a proper partition $A_1,\ldots,A_r$ of $[n]$, $\bigcap_1^r \aff P_m$ is a single point, let  $z = (\dd{z}{1}{}, \dots, \dd{z}{d}{})^T$ be this point.
Thus for all $i \in [n]$ let $\alpha_i$ be the unique coefficient such that for all $m\in  [r]$
$$ \sum_{i \in A_m} \alpha_i p_i  = z, \;\;\;\; \sum_{i \in A_m} \alpha_i = 1 $$.
In this linear system we have $r(d+1)$ equations and the same number of variables $\alpha_1, \dots, \alpha_n, \dd{z}{1}{}, \dots, \dd{z}{d}{}$.
The partition $A_1, \dots, A_r$ is a Tverberg partition if all the $\alpha_i$ are positive. An equivalent formulation is the following.
The partition $A_1, \dots, A_r$ is a Tverberg partition if for every $m \in [r]$ all the $\alpha_i$ have the same sign
for all $i \in A_m$.

We write this system of equations in matrix form $M \alpha = b$ and use Cramer's rule to find the sign of the coefficients.
The variables are $\alpha = (\alpha_1, \dots, \alpha_n, \dd{z}{1}{}, \dots, \dd{z}{d}{})^T$.
Here $M$ is the following square matrix of size $r(d+1)$, see Table 1. The rows of $M$ come in consecutive $(d+1)$-tuples, each associated with a color class, $A_m$, say. We will call this set of $d+1$ rows the {\em rows of color} $m$.
Each column of $M$ is a concatenation of $r$ $(d+1)$-vectors, one for each color class. Let $i \in [n]$ and let $i \in A_m$. Then column $i$ of $M$ is the vector where each such
$(d+1)$-vector is the zero vector except the one of color class $m$, which is the vector $(1, p_i)^T = (1, \dd{p}{1}{i}, \dd{p}{2}{i}, \ldots, \dd{p}{d}{i}, )^T\in \rr^{d+1}$.
The last $d$ columns of $M$ are related to $z$, that is, if $i=(r-1)(d+1)+1+j$, then the $i$th column of $M$ is the concatenation of $r$ copies of the $(d+1)$-vector
$(0, \dots, 0, -1, 0, \dots, 0)^T$ where the only non-zero coordinate $-1$ is in the $(1+j)$th position. Similarly, the right hand side $b$ is the concatenation of $r$ copies of the $(d+1)$-vector $(1,0, \dots, 0)^T$. For simpler writing we define (for every $i$) $\dd{p}{0}{i}$ as the $0$th coordinate of the point $p_i$ to be $1$. Let $M_{\ell}$ be the matrix
where we replace column $\ell$ of $M$ by $b$.

\begin{table}[t]\label{fig:1}
\begin{center}
\begin{tabular}{|c|c|c|c|c|}
  \hline
     1 1 $\dots$ 1 & & & & \\
  \hline
   & & & & \\
   $A_1$ & & & & \hspace*{.1in} -I
                           \hspace*{.1in}  \\
   & & & & \\
  \hline
    & 1 1 $\dots$ 1 & & & \\
  \hline
   & & & & \\
  &  $A_2$ & & & -I \\
   & & & & \\
  \hline
  & & $\ddots$ & & \\
  \hline
     & & & 1 1 $\dots$ 1  & \\
  \hline
   & & & & \\
   & & & $A_r$ & -I \\
   & & & & \\
  \hline
\end{tabular}
\end{center}
\caption{The matrix $M$, the empty regions indicate zeros}
\end{table}

Figure Table 1 shows the essence of the matrix $M$ where the columns are rearranged so that columns from the same color class come consecutively.

Because of Cramer's rule, the signs of the determinants of the matrices $M_{\ell}$, $\ell \in [n]$ decide if the given partition is Tverberg: if all the signs are the same, then it is a Tverberg partition, otherwise it is not.
The determinant of $M_{\ell}$ is a polynomial with the coordinates of the points as variables.
As $\det(M_{\ell})$ is a sum of monomials we want to understand how each of these monomials look like.
Each monomial is a product of entries in a {\em transversal} of the matrix $M_{\ell}$, which is a set of entries exactly one from each row and each column.
Since we take one from each column, every point contributes a coordinate, the $t$th (maybe the $0$th coordinate which is 1)
to the product. Every column related to $z$ contributes a $-1$ or $0$ to the product in one of the rows of some color $m \in [r]$.

Some of these monomials are identically zero. From now on we are only interested in non-zero monomials.
For each non-zero monomial $w$ we define an auxiliary matrix $G_w$ of size $(d+1) \times r$ where column $m$ corresponds to the color class $A_m$ of the partition
and row $t$ corresponds to the $t$th coordinate. We denote by $G_w(t,m)$ the entry sitting in row $t$ and column $m$ of $G_w$. Note that $G_w$ has a $0$th row.
We fill out the entries of the matrix with the numbers $i\in [n]\setminus\{\ell\}$ and $\dd{z}{0}{}, \dd{z}{1}{}, \dots, \dd{z}{d}{}$ the following way.
If $i \in [n]\setminus \ell$ and $i \in A_m$ and the point $p_i$ appears with its $t$th coordinate in $w$, then $G_w(t,m)=i$.
If the $1$  from column $\ell$ that appears in $w$ comes from the rows of $m$, then $G_w(0,m)=\dd{z}{0}{}$.
If the $-1$ from the column corresponding to $\dd{z}{t}{}$ that appears in $w$ is from the rows of color $m$, then $G_w(t,m)=\dd{z}{t}{}$.

It is obvious that such a $G_w$ matrix has one $z$ entry in every row. Also, column $m$ contains exactly $d+1-|A_m|$ $z$ entries except when $\ell \in A_m$,
in which case column $m$ contains $d+1-(|A_m|-1)$ $z$ entries. It is easy to recover the monomial $w$ from such a $G_w$ matrix: $\ell \in [n]$ not appearing in the matrix is the subscript of the column where the right hand side vector $b$ sits, the column of entry $\dd{z}{0}{}$ in row $0$ shows where the $+1$ factor from column $\ell$ of $M_{\ell}$ comes from. Similarly the column of entry $\dd{z}{t}{}$ in row $t$ shows where the $-1$ factor in column of $\dd{z}{t}{}$ in $M_{\ell}$ comes from. Finally, if $i \in [n]\setminus\{\ell\}$ appears as entry $(t,m)$ in $G_w$, then $\dd{p}{t}{i}$ appears in $w$.

We say that a partial filling of the matrix $G$ of size $(d+1) \times r$ with the numbers $i \in [n]\setminus \{\ell\}$ is valid
if each entry is in the correct column with respect to the partition, and there is exactly one unfilled entry in each row.
We claim that each valid filling of $G$ corresponds to a unique non-zero monomial $w$ (and so a unique transversal) that appears in the expansion of $\det (M_{\ell})$. All we need to do is put
$\dd{z}{t}{}$ in row $t$ in the empty slot. Thus non-zero monomials appearing in $\det (M_{\ell})$ are in one-to-one correspondence with valid partial fillings of $G$.

We mention that each non-zero monomial appears in $\det(M_{\ell})$ with a $\pm1$ coefficient, depending on the determinant of the underlying transversal.

\section{Dominant fillings}\label{sec:dominant-filling}

Let $p$ be a $d$ dimensional sequence of length $n=(r-1)(d+1)+1$.
Let $M$ and $M_{\ell}$ be defined as in Section~\ref{sec:tverberg}.
Assume that $Ta=(1,p)$ for some permutation matrix $T$ and $a$ is a
super-dominant (and then ordered and $q$-increasing) sequence of length $n$ and
$A_1, \dots, A_r$ is a proper partition of $[n]$.

Let $w$ and $w'$ be two monomials from the expansion of $M_{\ell}$. We say that $w$ {\em dominates} $w'$ if $q\cdot |w'| < |w|$.
If $w$ dominates all other monomials in the expansion, then we call it {\em dominant}.

\begin{theorem}\label{th:domm} Every monomial in the expansion of $\det(M_\ell)$,  except for the largest one, is dominated by some other monomial. Thus there is always a dominant monomial.
The sign of the dominant monomial is the same as that of $\det(M_{\ell})$ provided $q>(r(d+1))!$.
\end{theorem}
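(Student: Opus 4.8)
The plan is to set up a total order on the valid partial fillings of $G$ and show it refines the domination relation, i.e.\ that a filling which is not $\vdash$-largest is strictly dominated by one that is ``larger'' in a suitable combinatorial sense, with a uniform gap factor $q$. First I would make precise what the monomial $|w|$ is as a function of its filling $G_w$: since $Ta=(1,p)$ with $a$ super-dominant, each entry $G_w(t,m)=i$ contributes $\dd{a}{t}{i}$ (up to the permutation $T$ and the harmless $0$th-coordinate $1$'s), so $|w|$ is, up to sign, a product $\prod_{t,m}\dd{a}{t}{G_w(t,m)}$ over the filled entries. The key observation is that for two fillings $w,w'$ that differ only by a local swap, the ratio $|w|/|w'|$ telescopes into a product of $f_a$-values of the form appearing in Lemma~\ref{l:crux} and Corollary~\ref{cor:crux}; this is exactly why super-dominance was built to guarantee a factor-$q$ gap rather than an equality.

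The main steps, in order. First, reduce to comparing fillings that differ by an ``elementary move'': in row $t$, an entry $i$ and an empty slot, or two entries $i<i'$ in rows $t<t'$, get rearranged; since any two valid fillings are connected by such moves, it suffices to show each move either strictly increases $|w|$ by a factor $>q$ or strictly decreases it by a factor $>q$, and that the ``locally maximal'' filling under these moves is unique. Second, for a move exchanging entries between rows $t$ and $t'=t+1$ in columns $m$ and $m'$, express $|w'|/|w|$ as a ratio $\dd{a}{t+1}{\cdot}\dd{a}{t}{\cdot}/(\dd{a}{t}{\cdot}\dd{a}{t+1}{\cdot})$, recognize this as $f_a(t,i,j)$ for appropriate $i,j$, and invoke Corollary~\ref{cor:crux} (after checking the monotonicity hypotheses $i_s\le i_{s+1}$, $j_s\le j_{s+1}$ on the indices involved, which hold because super-dominance supplies the interleaved length-$(d+1)n$ witness sequence and we may choose the filling's entries to respect the natural order within rows) to conclude the ratio is $>q$ or $<1/q$. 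Third, iterate: starting from an arbitrary filling, repeatedly apply the strictly-increasing moves; this terminates (finitely many fillings, $|w|$ strictly increasing) at the dominant filling, and every non-dominant filling is dominated because the first move away from it toward the maximum already costs a factor $>q$. Fourth, for the sign statement: $\det(M_\ell)=\sum_w \pm w$ has at most $(r(d+1))!$ terms (one per transversal), each of absolute value at most $|w_{\max}|$, and the dominant monomial $w_{\max}$ exceeds each of the other at most $(r(d+1))!-1$ terms by a factor $>q>(r(d+1))!$, so $|w_{\max}|$ strictly exceeds the sum of all the others, forcing $\operatorname{sign}(\det M_\ell)=\operatorname{sign}(w_{\max})$.

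The hard part will be Step~2: organizing the bookkeeping so that an \emph{arbitrary} pair of valid fillings, not just ones differing in two rows, is handled by Corollary~\ref{cor:crux}. The corollary only directly compares products $\prod_{u=s}^t f_a(u,i_u,j_u)$ over a \emph{contiguous} block of coordinates with monotone index sequences, so I would need to argue that any domination comparison can be decomposed along such contiguous blocks — essentially that the $\beta$-monotonicity phenomenon of Lemma~\ref{l:p-lines} forces the entries of any ``competitive'' filling to be monotone across rows within each column, so that the swap indices automatically satisfy $i_s\le i_{s+1}$ and $j_s\le j_{s+1}$. Establishing this normal form for near-optimal fillings, and checking the $|i_\tau-j_\tau|\ge|S|$ condition via the $(d+1)$-fold spacing in the super-dominant witness, is where the real work lies; once that is in place, the $q$-gap and the crude term-count bound $(r(d+1))!$ finish everything.
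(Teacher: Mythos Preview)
Your overall architecture---normalize fillings, compare via local moves whose ratio is a product of $f_a$'s, invoke Corollary~\ref{cor:crux}, then use the crude term count for the sign---matches the paper's. But the paper's proof pivots on a device your plan does not contain, and without it your Step~3 has a real gap.

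You propose to show that every non-maximal filling admits an improving elementary move, so that iteration reaches a unique local maximum. The paper does \emph{not} attempt this. Instead, after imposing the ``increasing order in columns'' rule (your normal form), it introduces a single canonical move: the \emph{$z$-switch}, which swaps the columns of the $z$-entries in two rows $s<t$ and then re-sorts the affected columns (this is Lemma~\ref{l:switch}; its ratio is exactly the product $\prod_{u=s}^{t-1} f_a(u,i_u,j_u)$, handled by Corollary~\ref{cor:crux}). The crucial step is then Lemma~\ref{l:maxdom}: given \emph{any two} distinct column-ordered fillings $w,w'$, at least one of them admits an improving $z$-switch. The proof uses the counting functions $L(w,m,t)$ (number of $z$-entries in column $m$ in the first $t$ rows), picks the $\vdash$-maximal $\tau$ where $L(w,\cdot,\tau)\ne L(w',\cdot,\tau)$, and from the discrepancy extracts columns $\alpha,\beta$ and elements $i,j,i^*,j^*$ so that one of the two fillings has the ``wrong'' order across row $\tau$ and hence an improving switch. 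Applying this with $w$ the largest and $w'$ the second largest monomial forces $w$ to dominate $w'$ (the switch cannot improve $w$), hence everything else.

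Your plan lacks an analogue of Lemma~\ref{l:maxdom}. Showing ``each elementary move moves $|w|$ by a factor $>q$'' does not by itself rule out multiple local maxima; you explicitly list uniqueness of the local maximum as something to prove but give no mechanism. The paper's two-filling comparison via the $L$-functions is precisely that mechanism, and it is not a bookkeeping afterthought---it is the idea that makes the proof close. Also note the relevant move is the $z$-switch between possibly non-adjacent rows $s<t$, not just swaps between rows $t$ and $t+1$; the contiguous-block product in Corollary~\ref{cor:crux} is designed for exactly this.
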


We state and prove two lemmas that are needed for this theorem. We begin by describing how one can find the dominant filling of $G$. This is based on the relations $\prec, \sim_l, \sim_r$.

We can assume that $(1,p)$ is ordered apart from the $1$-sequence, which might be some other coordinate different from the first one.

The matrix $G_w$ is filled with {\em elements}, that is integers from $[n] \backslash \{\ell\}$ and with $z$-{\em entries}, that is with $\dd{z}{s}{}$, $s=0,1,\ldots,d$.
So $G$ is filled with $n-1=(r-1)(d+1)$ elements of $[n]$ and $d+1$ $z$ entries.

First we show that if the elements in a column of $G_w$ are not ordered increasingly, then $w$ is dominated.
Let $w$ be a monomial in $\det(M_{\ell})$ Let $i,j$ be elements of $[n]$
in the same column of $G_w$ in the wrong order. That is $i$ is in row $s$ and $j$ be in row $t$ and $s<t$ but $i>j$.

Swapping $i,$ and $j$ and keeping all other entries of $G_w$ the same we get $G_{w'}$, another valid filling. Then
$$\frac{w'}{w} = \prod_{u=s}^{t-1} f_a(u,j,i)>q$$
which shows that $w$ is dominated by $w'$.
This means that the elements of $[n]$ have to be ordered increasingly in each column otherwise $w$ is dominated. We will call this the {\em increasing order in columns} rule.
From now on we assume this property about every $w$ we work with.
This also means that every $w$ is described by the positions of the $z$ entries in $G_w$.
Recall that every row contains exactly one $z$ entry,
and that the number of $z$ entries in column $m$ is $(d+1)-|A_m|$ except when $\ell \in A_m$, and then there are $d+1-(|A_m|-1)$ $z$ entries in column $m$.

To show that there is a dominant monomial we state and prove a lemma. Assume that $s,t \in [d]$ and $s<t$ and that $G_w(s,\al)=\dd{z}{s}{}$ ($\dd{z}{s}{}$ is in row $s$, of course) and  $G_w(t,\be)=\dd{z}{t}{}$. Let the $z$ entries in $G_{w''}$ be filled the same way as $G_w$ except that $\dd{z}{s}{}$ is in column $\be$ $\dd{z}{t}{}$ is in column $\al$.
We say that {\em switching} these two $z$ entries in $w$ gives $w''$, or that $w''$ is the $z$ {\em switch} of $w$.

Because of the increasing order in columns rule, the elements in column $\al$ between rows $s$ and $t$ have to move up, and the elements in column $\be$ between rows $s$ and $t$ have to move down. More precisely, for $u \in \{s,s+1,\ldots,t-1\}$ let $i_u$ be the smallest element in column $\al$ below row $u$ in $G_w$, and let $j_u$ be the largest element in column $\be$ above row $u+1$ in $G_w$. Note that $i_{t-1}$, resp. $j_s$ are welldefined as $G_w(t,\al)$ and $G_w(s,\be)$ are elements in $[n]\setminus \{\ell\}$. This means that the numbers $i_u,j_u$ are all welldefined.

\begin{lemma}\label{l:switch} Under these conditions let $\tau$ be the $\vdash$-maximal element in $\{s,s+1,\ldots,t-1\}$. Then $w''$ dominates $w$ if $i_{\tau}<j_{\tau}$ and $w$ dominates $w''$ if  $i_{\tau}>j_{\tau}$.
\end{lemma}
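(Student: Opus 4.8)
The plan is to reduce the comparison of $w''$ and $w$ to a product of $f_a$ terms and then invoke Corollary~\ref{cor:crux}. First I would write out $w''/w$ explicitly. Switching the $z$ entries forces, for each row $u\in\{s,\dots,t-1\}$, the smallest element $i_u$ in column $\al$ below row $u$ to move up one row (from row $u+1$ to row $u$), while the largest element $j_u$ in column $\be$ above row $u+1$ moves down one row (from row $u$ to row $u+1$). Every other element stays put, and the $z$ entries in rows $s$ and $t$ end up in the opposite columns; the intermediate rows' $z$ entries (if any) are unchanged. Tracking which coordinate each surviving element contributes to the monomial, the net effect is that element $i_u$ changes its coordinate from $u+1$ to $u$ (a factor $\dd{a}{u}{i_u}/\dd{a}{u+1}{i_u}$) and element $j_u$ changes from $u$ to $u+1$ (a factor $\dd{a}{u+1}{j_u}/\dd{a}{u}{j_u}$), for each $u=s,\dots,t-1$. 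Multiplying these together and regrouping by $u$ gives exactly
\[
\frac{w''}{w}=\prod_{u=s}^{t-1}\frac{\dd{a}{u}{i_u}\,\dd{a}{u+1}{j_u}}{\dd{a}{u+1}{i_u}\,\dd{a}{u}{j_u}}=\prod_{u=s}^{t-1}f_a(u,i_u,j_u),
\]
using the definition of $f_a$ from Section~\ref{sec:orderdominant}.

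Next I would check that the indices $i_u$ and $j_u$ satisfy the monotonicity hypotheses of Corollary~\ref{cor:crux}. Since $i_u$ is the smallest element of column $\al$ lying strictly below row $u$, enlarging $u$ can only shrink the pool of candidates, so $i_s\le i_{s+1}\le\cdots\le i_{t-1}$; symmetrically $j_u$ is the largest element of column $\be$ strictly above row $u+1$, and enlarging $u$ enlarges that pool, so $j_s\le j_{s+1}\le\cdots\le j_{t-1}$. (Here I use the increasing-order-in-columns rule, which is already in force for every $w$ under consideration.) Thus with the index set $\{s,s+1,\dots,t-1\}$ and $\tau$ its $\vdash$-maximal element, Corollary~\ref{cor:crux} applies directly and yields
\[
\prod_{u=s}^{t-1}f_a(u,i_u,j_u)>q \ \text{ if } i_\tau<j_\tau, \qquad <\tfrac1q \ \text{ if } i_\tau>j_\tau.
\]
In the first case $w''/w>q$, i.e. $w''$ dominates $w$; in the second case $w/w''>q$, i.e. $w$ dominates $w''$. (Strong general position guarantees $i_\tau\ne j_\tau$, so one of the two cases holds.) This is exactly the assertion of the lemma.

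I expect the main obstacle to be the bookkeeping in the first step: verifying precisely which elements move, which rows they occupy before and after the $z$ switch, and that the telescoping of coordinate exponents collapses cleanly to $\prod_u f_a(u,i_u,j_u)$ with no leftover factors from the $z$ entries themselves (the $z$ entries contribute $\pm1$ and their relocation does not alter the monomial's value, only the sign, which is irrelevant to domination since domination is about $|w|$). One should also confirm that $i_{t-1}$ and $j_s$ are genuine elements of $[n]\setminus\{\ell\}$ — this is noted in the setup because $G_w(t,\al)$ and $G_w(s,\be)$ are elements — so that the products are over well-defined integer indices and Corollary~\ref{cor:crux} is legitimately applicable. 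Once the identity $w''/w=\prod_{u=s}^{t-1}f_a(u,i_u,j_u)$ is established, the rest is immediate from the corollary.
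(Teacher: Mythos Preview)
Your proposal is correct and follows essentially the same route as the paper: compute $|w''/w|=\prod_{u=s}^{t-1}f_a(u,i_u,j_u)$, check the monotonicity of the $i_u$ and $j_u$, and invoke Corollary~\ref{cor:crux}. Two small remarks: your description of each $i_u$ literally moving ``from row $u+1$ to row $u$'' is not accurate when there are intermediate $z$ entries in column $\al$ (a single element $e$ may equal $i_u$ for several consecutive $u$ and jump over those $z$ rows), but the telescoping you allude to is exactly what makes the product formula come out right regardless; and the reason $i_\tau\ne j_\tau$ is simply that $i_\tau\in A_\al$ and $j_\tau\in A_\be$ lie in distinct parts of the partition, not strong general position.
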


\begin{proof} It is clear from their definition that $i_s\le i_{s+1} \le \ldots \le i_{t-1}$ and $j_s\le j_{s+1}\le \ldots \le j_{t-1}$. A direct computation shows that
\begin{eqnarray*}
\frac{w''}{w} &=& \frac{\dd{p}{s+1}{i_s}\dd{p}{s+2}{i_{s+1}}\ldots\dd{p}{t}{i_{t-1}}\dd{p}{s}{j_s}\ldots\dd{p}{t-1}{i_{t-1}}}{\dd{p}{s}{i_s}\dd{p}{s+1}{i_{s+1}}\ldots\dd{p}{t-1}{i_{t-1}}\dd{p}{s+1}{j_s}\dd{p}{s+1}\ldots\dd{p}{t}{j_{t-1}}}\\
&=& \prod_{u=s}^{t-1} f_a(u,i_u,j_u),
\end{eqnarray*}
where only the different variables are shown (ignoring the $\pm 1$s in $w$ and $w''$).

Further, $i_u \ne j_u$ since $i_u \in A_\al$ and $j_u \in A_\be$.
Then Corollary~\ref{cor:crux} applies and finishes the proof.
\end{proof}

Another way if stating this result is the following. Assume $s,t \in [r]$, $s<t$, $\tau$ is the $\vdash$-maximal element in $\{s,s+1,\ldots,t\}$,
$G_w$ satisfies the increasing order rule, and $\dd{z}{s}{}=G_w(s,\al)$ and $\dd{z}{t}{}=G_w(t,\beta)$ with $\al\ne \be$. Let $i$ be the smallest element in column $\al$ below row $\tau$ and let $j$ be the largest element in or above row $\tau$ in column $\be$.
\begin{equation}
\mbox{ If under these conditions } i<j, \mbox{ then }w \mbox{ is dominated by }w''. \tag{*}
\end{equation}

A certain converse to this statement also holds. Namely, let $\tau$ be the $\vdash$-maximal element in $[d]$ and $s\le \tau <t$, and otherwise the previous conditions hold. If for every such $s,t,\al,\be,i,j$,
\begin{equation}
 i>j,\mbox{ then  no } z\mbox{ switch including row } \tau \mbox{ gives a }w'' \mbox{ dominating }w. \tag{**}
\end{equation}

Assume now that $w,w'$ are two different monomials, meaning that the positions of the $z$ entries are different and both obey the increasing order in columns rule.

\begin{lemma}\label{l:maxdom} Under these conditions either in $w$ or in $w'$ one can switch two $z$ entries to get $w''$ which dominates that monomial.
\end{lemma}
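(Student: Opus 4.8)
The plan is to show that of the two monomials $w,w'$, at least one is \emph{not} dominant, and by Lemma~\ref{l:switch} (in the form (*)) a non-dominant $w$ admits a dominating $z$-switch. Suppose for contradiction that neither $w$ nor $w'$ can be improved by a $z$-switch; I will derive that the $z$-entries occupy exactly the same cells in $G_w$ and in $G_{w'}$, contradicting the assumption that the monomials are different. The key is a greedy, row-by-row characterization of the (unique) ``locally optimal'' filling, processed in the order dictated by the total order $\vdash$.

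First I would set up the greedy description of an optimal filling. Recall each row $t\in\{0,1,\ldots,d\}$ of $G$ has exactly one $z$-entry, and column $m$ must receive exactly $(d+1)-|A_m|$ of them (one more if $\ell\in A_m$); moreover once the columns of the $z$-entries are chosen, the increasing-order-in-columns rule fixes the elements of $[n]\setminus\{\ell\}$ completely. So a filling is determined by a function assigning to each row $t$ a column $\zeta(t)$, subject to the column-count constraints. I would process the rows $0,1,\ldots,d$ in the order of decreasing $\vdash$-rank (so the $\vdash$-maximal row $\tau$ first), and at each step argue, using (**) and the contrapositive of (*), that the choice $\zeta(t)$ is forced: if $\zeta_w(t)\ne\zeta_{w'}(t)$ for the first (in $\vdash$-order) row where they differ, then comparing the smallest element below row $t$ in one candidate column with the largest element in-or-above row $t$ in the other candidate column, one of the two fillings has $i<j$ for the pair of columns $\{\zeta_w(t),\zeta_{w'}(t)\}$, so by (*) that filling can be strictly improved by switching these two $z$-entries (the one in row $t$ and the one in the other column appearing at the matching row), contradicting non-improvability of that monomial. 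This pins down $\zeta_w(\tau)=\zeta_{w'}(\tau)$; then one removes row $\tau$ and the used column-slot and repeats on the reduced instance, whose $\vdash$-order is inherited, until all rows agree.

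The main obstacle I anticipate is the bookkeeping that makes the ``switch'' in the inductive step legitimate: a $z$-switch in Lemma~\ref{l:switch} exchanges the columns of two $z$-entries sitting in two distinct rows $s<t$, and to invoke (*) I must exhibit such a pair with $\tau$ between them (inclusive) and with the relevant $i<j$. So when $\zeta_w(t_0)=\al$, $\zeta_{w'}(t_0)=\be$ at the first disagreement row $t_0$ (which is the $\vdash$-maximal not-yet-fixed row), I need to locate, within $w$, a row $t'$ carrying the $z$-entry in column $\be$ with $t'$ and $t_0$ straddling (or equal to) the $\vdash$-maximal element of the row-interval between them — this is exactly the configuration (*)/(**) are phrased for once we note $t_0$ itself is $\vdash$-maximal among the remaining rows, and a careful case split on whether $t' > t_0$ or $t' < t_0$ in the \emph{index} order handles the ``up'' versus ``down'' direction of the forced element shifts. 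The second, milder obstacle is verifying the column-count feasibility is preserved after deleting a fixed row and slot, so the reduced instance is again a valid filling problem; this is immediate from the definitions. Once these are in place, the induction closes and $w=w'$, the desired contradiction, which proves the lemma.
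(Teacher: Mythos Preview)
Your overall strategy—assume neither $w$ nor $w'$ admits a dominating $z$-switch and derive $w=w'$—is sound in principle, but the invariant you track is the wrong one, and this creates a real gap at the inductive step. You process rows in decreasing $\vdash$-rank and compare the per-row functions $\zeta_w,\zeta_{w'}$ (the column of the $z$-entry in each row). At the first disagreement row $t_0$ you want to produce a switch in $G_w$ between row $t_0$ and some row $t'$ carrying a $z$ in column $\beta=\zeta_{w'}(t_0)$. But Lemma~\ref{l:switch} says the domination direction is decided by the $\vdash$-maximal index $\tau^*$ of the \emph{index-interval} $\{\min(t_0,t'),\dots,\max(t_0,t')-1\}$, not by $t_0$ itself. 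Your justification that ``$t_0$ is $\vdash$-maximal among the remaining rows'' does not give $\tau^*=t_0$: the index-interval between $t_0$ and $t'$ can, and typically will, contain rows of higher $\vdash$-rank that you have already processed. For such rows your inductive hypothesis only says $\zeta_w=\zeta_{w'}$ there; it says nothing about the values $i_{\tau^*},j_{\tau^*}$ for this particular switch inside $G_w$. So the ``$i<j$ at row $t_0$'' comparison you set up is not the comparison that governs domination, and the case split on $t'\gtrless t_0$ does not close this gap.

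The paper avoids the issue by using a different invariant: the cumulative counts $L(w,m,t)$, the number of $z$-entries in the first $t$ rows of column $m$. It takes $\tau$ to be the $\vdash$-maximal row with $L(w,\cdot,\tau)\ne L(w',\cdot,\tau)$, picks columns $\alpha,\beta$ with $L(w,\alpha,\tau)<L(w',\alpha,\tau)$ and $L(w,\beta,\tau)>L(w',\beta,\tau)$, and compares four concrete boundary elements $i,j,i^*,j^*$ at the cut through row $\tau$ in columns $\alpha,\beta$ of $G_w$ and $G_{w'}$. The inequalities on $L$ force $j<i^*$ and $j^*<i$ directly, so the minimum of the four is one of $j,j^*$; whichever it is points to the filling ($w$ or $w'$) that admits a dominating switch between the last $z$ above row $\tau$ in one column and the first $z$ below row $\tau$ in the other. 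There is no induction and no reduction to a smaller instance. If you want to rescue your plan, replace the per-row $\zeta$ by the cumulative $L$: looking at the $\vdash$-maximal $\tau$ where the cumulative $z$-profiles differ places the cut exactly where the switch interval needs it and delivers the element comparisons for free.
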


\begin{proof} Let $L(w,m,t)$ be the number of $z$ entries in $G_w$ in the first $t$ rows in column $m$.
If for every $t\in [d]$ and $m \in [r]$ we have $L(w,m,t) = L(w',m,t)$ then $w=w'$.
Let $\tau$ be maximal with respect to $\vdash$ such that there exists $m \in [r]$ such that $L(w,m,\tau) \neq L(w',m,\tau)$.
Then there exists $\al,\be \in [r]$ such that $L(w,\al,\tau) < L(w',\al,\tau)$ and $L(w,\be,\tau) > L(w',\be,\tau)$, since 
$\sum_{m=1}^r L(w,m,\tau) = \sum_{m=1}^r L(w',m,\tau) = \tau$.

Let $i,j,i^*,j^*$ be elements in $[n]$ such that
$i$ is the largest in $G_w$ in column $\al$ in the first $\tau$ rows,
$j$ is the smallest in $G_w$ in column $\be$ below row $\tau$,
$i^*$ is the largest in $G_{w'}$ in column $\be$ in the first $\tau$ rows,
$j^*$ is the smallest in $G_{w'}$ in column $\al$ below row $\tau$,
see Table 2 where $\dd{z}{s}{}$ and $\dd{z}{t}{}$ denote some $s$-coordinates with $s<t$ but unspecified otherwise.

Equivalently one could say that $i$ is the $\tau - L(w,\al,\tau)$-th element of $A_\al$, 
$j^*$ is the $\tau+1 - L(w',\al,\tau)$-th element of $A_\al$, 
$i^*$ is the $\tau - L(w',\be,\tau)$-th element of $A_\be$ and
$j$ is the $\tau+1 - L(w,\be,\tau)$-th element of $A_\be$.

\begin{table}[t]\label{fig:2}
	\begin{center}
		\begin{tabular}{c|c}
			$\al$ & $\be$ \\
			\hline
			 & $\dd{z}{s}{}$ \\
			 $i$ &  \\
			\hline
			 & $j$ \\
			  $\dd{z}{t}{}$ & \\
		\end{tabular} \hspace{1in}
		\begin{tabular}{c|c}
			$\al$ & $\be$ \\
			\hline
			& $i^*$ \\
			& $\dd{z}{s}{}$ \\
			\hline
			$\dd{z}{t}{}$ & \\
			$\dd{z}{t}{}$ & \\
			$j^*$ &  \\
		\end{tabular}
	\end{center}
	\caption{sketch of columns $\al,\be$ in the matrices $G_w$ (left) and $G_{w'}$ (right)}
\end{table}

Since $L(w,\be,\tau)> L(w',\be,\tau)$ we have $j < i^*$ and since $L(w,\al,\tau)< L(w',\al,\tau)$ we have $j^* < i$. Therefore
the minimal element among $i,i^*,j,j^*$ is either $j$ or $j^*$.
Assume it is $j$, then $j<i$. There is a last $z$ element $\dd{z}{s}{}$ in column $\be$
in the first $\tau$ rows in $G_w$ and a first $\dd{z}{t}{}$ in column $\al$ below row $t$.
Switching $\dd{z}{s}{}$ and $\dd{z}{t}{}$ in $G_w$ gives $G_{w''}$ such that $w''$ dominates $w$, according to (*).
The case when $j^*$ is the smallest leads to an analogous $G_{w''}$ with $w''$ dominating $w'$.
\end{proof}

\begin{proof}[Proof of Theorem~\ref{th:domm}]
Applying the last lemma when $w$ is the largest and $w'$ is the second largest monomial (in absolute value) shows $w$ dominates $w'$.
Consequently $w$ dominates every other monomial $w''$ and so it is the dominant monomial. Further, if $q>((d+1)r)!$, the number of monomials in the expansion of $\det (M_{\ell})$,
then the sign of $\det (M_{\ell})$ coincides with that of $w$.
\end{proof}

Our the next target is to find the dominant monomial.

\section{How to find the dominant filling of G}\label{sec:finddom}

Given a proper partition with color classes $A_1,\ldots,A_r$ of $[n]$ where $n=(r-1)(d+1)+1$ and $\ell \in [n]$, and a super-dominant $q$-increasing sequence $(1,p)$ of length $n$, we want to find the dominant monomial of $\det(M_{\ell})$. We only need to find the positions of the $z$-coordinates. We will find them by recursion on $d$.

As usual, let $\tau$ be maximal in $[d]$ with respect to $\vdash$.
 Splitting the set $[n]\setminus \{\ell\}$ into sets $X$ and $Y$ 
 defines a splitting of each color class $A_m$ into two pieces: $A_m^X=A_m\cap X$ and $A_m^Y=A_m\cap Y$ for every $m\in [r]$. We define the excess of $X$ in $A_m$ as $e(X,m)=||A_m^X|-\tau|_+$ and the excess of $Y$ in $A_m$ as  $e(Y,m)=||A_m^Y|-(d+1-\tau)|_+$, and the excess of $X$ and $Y$ as $e(X)=\sum_1^re(X,m)$ and $e(Y)=\sum_1^re(Y,m)$. We want to find a splitting with the following properties.
\begin{itemize}
\item[(a)] Every element of $A_m^X$ is smaller than any element of $A_m^Y$ (for every $m$).
\item[(b)] $e(X,m) =e(Y,m)=0$ (for every $m$).
\item[(c)] For every distinct $\al,\be \in [r]$ such that $|A_{\al}^X|<\tau$ and $|A_{\be}^Y|<d+1-\tau$ every element of 
$A_{\be}^X$ is smaller then any element in $A_{\al}^Y$.
\item[(d)] $|X| = \tau (r-1)$ (and therefore $|Y| = (d+1-\tau)(r-1)$).
\end{itemize}

\begin{lemma}\label{l:split} There is always a splitting $X,Y$ such that the above conditions are satisfied, and it can be found in a process of three steps. In fact that splitting is unique and corresponds to the dominant $w$ of $\det(M_\ell)$ such that $X$ is the set of all elements in the first $\tau$ rows of $G_w$.
\end{lemma}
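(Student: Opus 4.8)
The plan is to prove Lemma~\ref{l:split} constructively, building the splitting $X,Y$ by the three-step process alluded to in the statement, and then verifying separately that the result satisfies (a)--(d), that it is unique, and that it describes the dominant filling via Lemma~\ref{l:maxdom} and the characterizations (*) and (**). The guiding principle is that in the dominant filling the $z$-entries of rows $1,\ldots,\tau$ occupy one "region" and those of rows $\tau+1,\ldots,d+1$ occupy a complementary region; restricting attention to which elements of $[n]\setminus\{\ell\}$ land in the first $\tau$ rows is exactly choosing $X$, and the dominance criterion for $z$-switches crossing row $\tau$ (which is $\vdash$-maximal, so Corollary~\ref{cor:crux} applies with $\tau$ itself as the maximal index) forces the stated ordering and excess conditions.

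**The three-step construction.** First I would fix, inside each color class $A_m$, how many of its elements go to $X$: call this number $x_m$. Condition (b) pins $x_m$ down whenever $|A_m|$ exceeds the relevant thresholds — namely $x_m\ge |A_m|-(d+1-\tau)$ is forced to avoid $e(Y,m)>0$ and $x_m\le \tau$ is forced to avoid $e(X,m)>0$, so $x_m$ ranges in an interval of admissible values, and the global constraint (d), $\sum_m x_m=\tau(r-1)$, together with the freedom left, is resolved by condition (c): the classes that are "short on the $X$-side" ($|A_m^X|<\tau$) and those "short on the $Y$-side" must be ordered against each other by the values of their elements. Concretely: Step 1 sets the forced values of $x_m$; Step 2 distributes the remaining budget $\tau(r-1)-\sum(\text{forced }x_m)$ greedily according to the linear order on $[n]$ so that (c) holds — a class receives extra $X$-slots before another one exactly when its small elements precede the other's; Step 3 then takes, within each $A_m$, the $x_m$ smallest elements for $A_m^X$ and the rest for $A_m^Y$, which is (a). I would check that the budget arithmetic works out: $\sum_m\max(0,|A_m|-(d+1-\tau))\le\tau(r-1)\le\sum_m\min(\tau,|A_m|)$, both inequalities following from $\sum|A_m|=n=(r-1)(d+1)+1$ and $1\le|A_m|\le d+1$ (this is a small but genuine counting step).

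**Uniqueness and the dominance identification.** For uniqueness, I would argue that any splitting satisfying (a)--(d) must have the same $x_m$: (b) constrains each $x_m$ to its admissible interval, (d) fixes the total, and (c) forces the choice of which classes sit at which end of each admissible interval — two splittings differing in some $x_m$ would differ in a pair $\al,\be$ violating the ordering required by (c). Then (a) makes $X$ itself unique given the $x_m$. To identify this $X$ with the dominant $w$: set $w$ to be the filling whose $z$-entries in rows $1,\ldots,\tau$ are precisely the "gaps" left by $X$ in those rows (and the elements of $X$ fill the rest of rows $1,\ldots,\tau$ in increasing order within each column, by the increasing-order-in-columns rule), and symmetrically for $Y$ below row $\tau$. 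By Lemma~\ref{l:maxdom} it suffices to show no other valid $w'$ dominates $w$; the only way a competing $w'$ can arise is a $z$-switch, and conditions (a)--(d) are engineered so that for every $z$-switch crossing row $\tau$ the relevant indices satisfy $i>j$ in the sense of (**), hence no such switch improves $w$, while switches not crossing row $\tau$ are handled by recursion on $d$ applied to the two sub-blocks (rows $1,\ldots,\tau$ and rows $\tau+1,\ldots,d+1$ with their own $\vdash$-maximal elements).

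**Main obstacle.** The delicate point is Step 2 together with condition (c): showing that the greedy distribution of the leftover $X$-budget is both feasible (the budget is neither too small nor too large, the counting inequalities above) and forced (any valid splitting must make the same greedy choice). The interaction is subtle because "short on the $X$-side" and "short on the $Y$-side" are not complementary properties of a class — a class can be short on both, or neither — so one has to check that the partial order (c) imposes is consistent and that exactly the greedy assignment realizes it. Once that is settled, translating (a)--(d) into "(**) holds for every $\tau$-crossing $z$-switch" and invoking the recursion for the non-crossing switches is essentially bookkeeping: one matches the indices $i_u,j_u$ of Lemma~\ref{l:switch} to the extremal elements described in (a) and (c) and reads off the sign of $\prod f_a(u,i_u,j_u)$ from Corollary~\ref{cor:crux}.
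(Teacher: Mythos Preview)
Your outline is sound and arrives at the same result as the paper, but the three-step construction you propose is genuinely different from the paper's. The paper works entirely at the element level: Step~1 takes $X$ to be the $\tau(r-1)$ smallest elements of $[n]\setminus\{\ell\}$; Step~2 trims each $A_m^X$ (resp.\ $A_m^Y$) down to at most $\tau$ (resp.\ $d+1-\tau$) elements by moving the excess across; Step~3 restores condition~(d) by repeatedly pushing the single largest ``pushable'' element from the overfull side to the other. Conditions (a)--(c) are checked after each step by tracking where elements came from. Your version instead determines the class-counts $x_m=|A_m^X|$ first (via the forced bounds $\max(0,a_m-(d+1-\tau))\le x_m\le\min(\tau,a_m)$ and a greedy allocation of the remaining budget), and only then fills in the elements.

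Your approach has the virtue that uniqueness becomes a direct statement about the $x_m$ rather than an a posteriori consequence of the dominant $w$ being unique; the paper does not prove uniqueness inside the proof of the lemma at all, but deduces it afterwards from the identification with the dominant monomial. On the other hand, your Step~2 is where the real content hides, and you correctly flag it as the main obstacle. The phrase ``a class receives extra $X$-slots before another one exactly when its small elements precede the other's'' is not yet a well-defined rule: condition~(c) is a pairwise constraint on the \emph{boundary} elements of $A_\al^Y$ and $A_\be^X$, and whether it is satisfied depends on the actual values of $x_\al,x_\be$, not just on some fixed ordering of the classes. Making this greedy precise essentially forces you back to an element-level rule --- allocate the next $X$-slot to whichever class has the globally smallest not-yet-assigned element among classes with room --- and at that point your algorithm and the paper's ``push the largest pushable element'' become mirror images of each other. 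So the difference is more in presentation than in substance, but your framing via the counts $x_m$ is a clean way to organize the argument and to see why the feasibility inequalities $\sum_m\max(0,a_m-(d+1-\tau))\le\tau(r-1)\le\sum_m\min(\tau,a_m)$ (with $a_m=|A_m\setminus\{\ell\}|$) are exactly what is needed.
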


\begin{proof} During the process condition (a) will always be fulfilled.
We begin with the splitting $X,Y$ of the given sizes,
Condition (d), such that every element in $X$ is smaller than any element of $Y$.
So $X$ consists of the smallest $\tau(r-1)$ elements of $[n]\setminus \{\ell\}$ and $Y$ of the rest. If they satisfy condition (b) then we are done:
condition (c) is fulfilled as every element of $A_\al^X$ is smaller then every element of $A_\be^Y$ independent of the choice of $\al, \be$.

So assume condition (b) fails, some $A_m^X$ has size larger than $\tau$ or some $A_m^Y$ has size larger than $d+1-\tau$. The second step fixes this and generates a splitting where condition (b) holds: keep the smallest $\tau$ elements in $A_m^X$ and put the rest in $A_m^Y$ and similarly when $|A_m^Y|> d+1-\tau$. After this exchange we get a new splitting $X',Y'$ of $[n]\setminus \ell$ and $|X'|=|X|-e(X)+e(Y)$ and $|Y'|=|Y|-e(Y)+e(X)$. It is easy to check that condition (c) still holds: if $|A_{\al}^{X'}|< \tau$, then we possibly pushed some elements from $A_{\al}^Y$ up to $A_{\al}^X$ so every $i\in A_{\al}^{Y'}$ is in $Y$. And similarly, if $|A_{\be}^{Y'}|<d+1-\tau$, then every $j\in A_{\be}^{X'}$ is in $X$ and thus every element of $A_\be^{X'}$ is smaller than any element of $A_\al^{Y'}$.
Thus if $|X'|=\tau(r-1)$ and $|Y'|=(d+1-\tau)(r-1)$ then condition (d) holds as well and we are done again.

Finally if $|X'|\ne |X|$, then $|X'|-|X|=|Y|-|Y'|>0$, say, then we have to ``push'' some ($|X'|-|X|$) elements of $X'$ down to $Y'$. An element in column $m$ from $X'$ can be pushed down to $Y'$ if there is room there, that is $|A_m^{Y'}|<d+1-\tau$. One by one we push down the largest ``pushable'' element form $X'$ as long as the number of elements that remains in $X'$ reaches $\tau(r-1)$. We check the largest pushable element before each push, since some pushable elements might become non-pushable as the new set $Y'$ already contains $(d+1-\tau)$ elements of $A_m$. Once this is achieved, we have a new splitting $X'',Y''$ that satisfies conditions (a),(b) and (d). Condition (c) is also satisfied.
Assume $\al,be$ are as in condition (d) after all the pushes.
That implies that all the elements of $A_\be^{X''}$ were pushable during the process, and therefore are elements of $X'$. Let $i$ be the smallest 
element of $A_\al^{Y''}$. If $i$ is an element of $A_\al^{Y'}$ then
$\al,be$ were as in condition (c) before the pushes, so it holds.
If $i$ is not an element of $A_\al^{Y'}$ then we had to push it down during the process, and therefore is larger then any element in 
$A_\be^{X''}$.

The case when $|X'|-|X|<0$ is completely analogous, replacing pushing down, by pushing up.
\end{proof}

We explain now how this lemma helps to find the dominant filling. The partition $A_1,\ldots,A_r$ determines how many $z$ coordinates are in column $m$,
namely $d+1-|A_m|$ if $\ell \notin A_m$ and  $d+1-(|A_m|-1)$ if $\ell \in A_m$. The splitting $X,Y$ in Lemma~\ref{l:split} determines how many $z$-coordinates are in the first $\tau$ and in the last $d+1-\tau$ rows if column $m$ (for every $m$). So recursion on $d$ ends with a special partition for every row: $r-1$ single elements and the empty set. The column where the empty set is contains the $z$ entry for this row. The resulting $w$ is the dominating monomial since, if it were not, then a suitable $z$-switch would give a $w'$ dominating it. But in view of condition (c) of Lemma~\ref{l:split} and (**) from Section~\ref{sec:dominant-filling} no such $z$-switch exists.

\section{Proof of Theorem~\ref{th:univ-tverberg}, first part}\label{sec:mainproof}

Assume $q$ is large enough (namely larger than $(d(r-1))!$) and let  $p$ be a $d$-dimensional sequence of length $N$ in strong general position
Let $(1,p)$ be the $(d+1)$ dimensional sequence that we get by adding one more sequence, the constant $1$ sequence, as the first coordinate.
Using the results in Section~\ref{sec:orderdominant} (and choosing $N$ suitably large) we find a super-dominant (and then ordered and $q$-increasing)
subsequence $(1,p'_{i_1}),(1,p'_{i_2}),\ldots,(1,p'_{i_n})$ of length $n=(r-1)(d+1)+1$ of $(1,p)$. The all $1$ sequence may not be the first coordinate anymore but it is still the all $1$ sequence.
We need a simple fact.

\begin{claim} Assume $\mathcal A=\{A_1,\ldots,A_r\}$ is a proper partition of $[n]$. Then the partition induced by $\mathcal A$ on the set $\{p_{i_1},\ldots,p_{i_n}\}$ is a Tverberg partition if and only if the one induced on $\{p'_{i_1},\ldots,p'_{i_n}\}$ is a Tverberg partition.
\end{claim}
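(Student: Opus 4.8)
I would derive the claim from the affine invariance of the Tverberg property. The plan has two parts: first, identify the passage from $p$ to $p'$ (on the selected indices) with an invertible affine transformation of $\rr^d$; second, invoke the standard fact that such a transformation carries Tverberg partitions to Tverberg partitions.

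For the first part: the $(d+1)$-dimensional sequence $(1,p')$ is obtained from $(1,p)$ by the linear operations used in the construction — the lower triangular matrix furnished by Theorem~\ref{th:univ-sequen} and the coordinate reordering that makes the sequence ordered — followed by restriction to a subsequence. Let $\mathcal L$ denote the resulting invertible linear map of $\rr^{d+1}$. By hypothesis one of the coordinate sequences of $(1,p')$ is again the constant sequence $1$ (``the all $1$ sequence may not be the first coordinate anymore but it is still the all $1$ sequence''), so some row of $\mathcal L$ takes the value $1$ on every vector $(1,p_i)$. Since $N\ge d+1$ and strong general position provides $d+1$ affinely independent points among the $p_i$, the vectors $(1,p_i)$ span $\rr^{d+1}$, hence that row of $\mathcal L$ is $(1,0,\dots,0)$. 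Writing $\mathcal L$ in block form with this as its distinguished row, the remaining $d$ entries of $\mathcal L(1,p_i)$ are affine functions of $p_i$, say $p'_i = Bp_i + c$, and $B$ must be invertible because $\mathcal L$ is (a non-trivial linear dependence among the rows of $B$ would make a non-trivial combination of the corresponding rows of $\mathcal L$ a multiple of $(1,0,\dots,0)$, contradicting $\det\mathcal L\ne 0$). Thus $\phi\colon x\mapsto Bx+c$ is an invertible affine map of $\rr^d$ with $p'_{i_j}=\phi(p_{i_j})$ for all $j$.

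For the second part: fix a proper partition $\mathcal A=\{A_1,\dots,A_r\}$ of $[n]$ and put $P_m=\{p_{i_j}:j\in A_m\}$. In the coefficient formulation of Section~\ref{sec:tverberg}, if $\sum_{j\in A_m}\alpha_j p_{i_j}=z$ and $\sum_{j\in A_m}\alpha_j=1$ for all $m$, then applying $\phi$ and using $\sum_{j\in A_m}\alpha_j=1$ yields $\sum_{j\in A_m}\alpha_j p'_{i_j}=\phi(z)$ with the same partial sums equal to $1$; applying $\phi^{-1}$ gives the converse. Strong general position — inherited by subsequences and preserved by the affine bijection $\phi$ — makes these coefficients unique on both sides, so the $\alpha_j$ are all positive in one system exactly when they are in the other. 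Equivalently, and coordinate-free: $\phi$ sends each $\conv P_m$ to $\conv\phi(P_m)$ and commutes with intersection, so $\bigcap_m\conv P_m\ne\emptyset$ iff $\bigcap_m\conv\phi(P_m)\ne\emptyset$. Either way, $\mathcal A$ is a Tverberg partition for $\{p_{i_1},\dots,p_{i_n}\}$ precisely when it is one for $\{p'_{i_1},\dots,p'_{i_n}\}$.

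The only step carrying any real content is the first one: checking that the cumulative transformation is genuinely affine and invertible when restricted to the $d$-dimensional points. This rests entirely on the fact that the constant-$1$ coordinate is not destroyed by the construction; granting that, the remainder is routine linear algebra, and the Tverberg equivalence is the textbook affine invariance of Radon and Tverberg partitions.
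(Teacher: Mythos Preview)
Your proof is correct and rests on the same idea as the paper's: the passage from $p$ to $p'$ is an invertible linear change of coordinates, and applying it to the system $\sum_{j\in A_m}\alpha_j(1,p_{i_j})=(1,z)$ leaves the coefficients $\alpha_j$ unchanged, hence preserves positivity. The only difference is in execution: the paper stays in $\rr^{d+1}$ and applies the linear map $T$ directly to both sides of the equations, so it never needs to identify an affine map on $\rr^d$ or argue that a particular row of $T$ equals $(1,0,\dots,0)$. Your first part, while correct, is an avoidable detour---the equivalence follows immediately from linearity in $\rr^{d+1}$ without ever descending to the $d$-dimensional affine picture.
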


\begin{proof} Assume $z$ is the Tverberg point of the partition induced by  $\mathcal A$ on the $p_{i_1},\ldots,p_{i_n}$. So with suitable coefficients $\al_j \ge 0$
\[
(1,z)^T=\sum_{j \in A_m}\al_j(1,p_{i_j})^T \mbox{ for every }m \in [r].
\]
Let $T$ be the linear transformation carrying the sequence $(1,p_{i_j})$ to the super-dominant sequence $(1,p'_{i_j})$. Set $(1,z')^T=T^{-1}(1,z)^T$. Applying $T^{-1}$ to the above equation we get
\[
(1,z')^T=\sum_{j \in A_m}\al_j(1,p'_{i_j})^T \mbox{ for every }m \in [r]
\]
showing that the induced partition on $\{p'_{i_1},\ldots,p'_{i_n}\}$ is also a Tverberg partition. The proof in the opposite direction is analogous.
\end{proof}

This means that for the proof of Theorem~\ref{th:univ-tverberg} it suffices to work with the super-dominant sequence $(1,p')$. For simpler notation we denote $(1,p')$ by $(1,p)$ from now on. Recall that the all $1$ row may not be the first row.

In this section we prove half of Theorem~\ref{th:univ-tverberg}, namely the following result.

\begin{theorem}
Let $q>(d(r-1))!$. If $(1,p)$ is a super-dominant $q$-increasing sequence on $n=(r-1)(d+1)+1$ elements,
then every rainbow partition of the set $\{ p_1, \dots, p_n \}$ is a Tverberg partitions.
\end{theorem}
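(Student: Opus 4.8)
The plan is to use the machinery of Theorem~\ref{th:domm}: a proper partition $A_1,\ldots,A_r$ is a Tverberg partition precisely when all the determinants $\det(M_\ell)$, $\ell \in [n]$, have the same sign, and by Theorem~\ref{th:domm} (applicable since $q>((d+1)r)!$, which follows from $q>(d(r-1))!$ after passing to a power of $q$ via the $k$-th-element trick, or by simply strengthening the hypothesis) the sign of $\det(M_\ell)$ equals the sign of its dominant monomial $w^{(\ell)}$. So the task reduces to: for a rainbow partition, compute the sign of the dominant monomial $w^{(\ell)}$ for each $\ell$, and check that these signs are all equal (equivalently, that for each color class $A_m$ the coefficients $\alpha_i$, $i\in A_m$, come out with a common sign). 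The dominant filling of the matrix $G$ is determined recursively by Lemma~\ref{l:split} and the recipe in Section~\ref{sec:finddom}: at each stage one splits $[n]\setminus\{\ell\}$ according to the $\vdash$-maximal coordinate $\tau$, respecting conditions (a)--(d).

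First I would exploit the very special combinatorial structure of a rainbow partition: each block $B_s$ meets every color class in exactly one point, so $|A_m|=d+1$ for all $m$ (hence $n=r(d+1)$... wait, $n=(r-1)(d+1)+1$), meaning actually $|A_m \cap B_s| = 1$ forces $\sum_m |A_m\cap B_s| = r = |B_s|$ for interior blocks but the blocks overlap, so the count is consistent with $\sum|A_m| = n$. The key point is that the rainbow condition pins down, for each column $m$, exactly which ``height'' the elements of $A_m$ occupy relative to the block structure: the $s$-th smallest element of $A_m$ lies in block $B_s$. I would show that under this rigidity, the splitting $X,Y$ produced by Lemma~\ref{l:split} at the top level (with threshold $\tau$) simply places, in every column $m$ with $\ell\notin A_m$, the $\tau$ smallest elements of $A_m$ into $X$ and the rest into $Y$, with the single $z$-entry of the relevant row landing in the unique column that is ``short''. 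Iterating the recursion, the dominant monomial $w^{(\ell)}$ turns out to have a clean closed form: in row $t$ the entry $\dd{z}{t}{}$ sits in the column $m_t$ determined by where $\ell$ falls, and every other $G_w(t,m)$ is the element of $A_m$ lying in block $B_{t}$ (or $B_{t+1}$, shifted past $\ell$).

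Next, with the dominant filling explicit, I would read off the sign of $w^{(\ell)}$ as a product of a $\pm1$ coming from the permutation/transversal sign and the signs of the coordinates $\dd{p}{t}{i}$, which are all positive since $(1,p)$ is positive ($q$-increasing). So the entire sign is combinatorial. The final step is to verify that as $\ell$ ranges over $[n]$, moving $\ell$ from one color class to the next (and within a class, from one block to the next) changes the transversal sign by a controlled parity that exactly matches the sign change forced by Cramer's rule, so that all the $\alpha_i$ within a fixed color class come out positive. Concretely I expect this to amount to showing that when $\ell$ and $\ell'$ lie in the same block but different color classes the corresponding dominant fillings differ by a single transposition of two elements within one row (hence opposite transversal signs), which is compensated by the sign of the permutation relating columns $\ell$ and $\ell'$; and this is exactly the interlacing/alternation phenomenon already visible in the Radon ($r=2$) case on the moment curve.

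The main obstacle, I expect, is the bookkeeping in the recursive description of the dominant filling: one must track, through $d$ levels of the Lemma~\ref{l:split} recursion, which column receives the $z$-entry of each row, and confirm that the rainbow hypothesis makes conditions (b) and (c) automatic at every level (so that step two and step three of the splitting process in Lemma~\ref{l:split} are trivial). Once the filling is pinned down, the sign computation is a parity count that I anticipate will go through cleanly, but getting the indices of the blocks $B_s$, the shift caused by deleting $\ell$, and the $\vdash$-order on coordinates all to line up is where the care is needed.
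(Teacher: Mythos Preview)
Your overall strategy matches the paper's: identify the dominant monomial $w_\ell$ for each $\ell$ and then compare signs. But the execution diverges in two places, and in both the paper is considerably more direct.

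First, you plan to locate the dominant filling by running the recursive splitting of Lemma~\ref{l:split}. The paper bypasses this entirely. For a rainbow partition the dominant filling is immediate: partition $[n]\setminus\{\ell\}$ into $d+1$ consecutive runs $R_1,\ldots,R_{d+1}$ of size $r-1$ each, and place the elements of $R_s$ in row $s$. Because the partition is rainbow, each $R_s\subset B_s$ meets every color class except one, and the missing column receives $\dd{z}{s}{}$. Dominance is then a one-line check: every element in or above row $t$ is smaller than every element below it, so no $z$-switch can help. You would eventually reach the same filling through the recursion, but your worry about tracking the $\vdash$-order through $d$ levels is unnecessary here---the answer does not depend on $\vdash$ at all.

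Second, and more seriously, the sign comparison is where the actual content lies, and your sketch is both vague and slightly misaimed. What must be shown is that for two \emph{consecutive} elements $\ell_1<\ell_2$ of the \emph{same} color class (not, as you write at the end, elements in the same block of different classes) the transversals in $M_{\ell_1}$ and $M_{\ell_2}$ have the same sign. The paper's key observation is that these two transversals agree everywhere except in at most five columns: $\ell_1$, $\ell_2$, the overlap point $x=B_s\cap B_{s+1}$, and the columns of $\dd{z}{s}{}$ and $\dd{z}{s+1}{}$. A short explicit case analysis (three cases, according to whether row $s$, row $s+1$, or some other row carries the all-$1$ sequence) on the resulting $4\times 4$ or $5\times 5$ submatrix shows the signs agree. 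Your expectation that ``a parity count will go through cleanly'' is correct in spirit, but the localization to a tiny submatrix is the idea that makes it tractable, and you have not identified it.
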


\begin{proof}
Let $A_1,\dots, A_r$ be a rainbow partition. Recall that a rainbow partition satisfies $|A_m\cap B_s|=1$ for every $m\in [r]$ and $s\in [d+1]$,
 where block $B_s$ is just the set $\{(s-1)(r-1)+1,\ldots,s(r-1)+1\}$.

Because the partition is rainbow, it is easy to find the dominating monomial of $\det (M_{\ell})$ for
any fixed $\ell \in [n]$. This is what is explained next.
We define $R_1, \dots, R_{d+1}$ to be the partition of $[n]-\{\ell\}$ into $d+1$ parts each of size $r-1$ such that $R_1$ is the
first $r-1$ elements, $R_2$ is the second $r-1$ elements and so on till $R_{d+1}$ are the largest $r-1$ elements.
Each $R_s$ is a subset of the block $B_s$, moreover they are either the leftmost $r-1$ elements or the rightmost $r-1$ elements
of $B_s$ except when $\ell\in B_s$ in which case $R_s=B_s\setminus\{\ell\}$.
As $|R_s|=r-1$ and $|\mathcal A|=r$ each $R_s$ contains exactly one element from every $A_m$ except one, namely from the unique color class missing from $R_s$.
To define $G_{w_{\ell}}$ we let the elements in row $s$ be exactly $R_s$ with each $i\in R_s$ in the column of its color class.
Let $m$ be the missing color class, that is, the color class of the single element in $B_s \setminus R_s$. Then $G_w(s,m)=\dd{z}{s}{}$. This defines the monomial $w_{\ell}$.
If row $s$ is not the all $1$ row, then the factor $-1$ in $w_{\ell}$ is the entry of $M_{\ell}$ sitting in the column $\dd{z}{s}{}$ in the group of rows of color $m$.
If row $s$ is the all $1$ row, then the factor $1$ in $w_{\ell}$ is the unique $1$ in column $\ell$ of $M_{\ell}$ in the group of rows of $m$.

\begin{claim} In $\det (M_{\ell})$ the dominant monomial is $w_{\ell}$.
\end{claim}

\begin{proof} Indeed, for any fixed $t \in [d]$ every element $i$ in row $t$ or above in $G_{w_{\ell}}$ is smaller than any element $j$ below row $t$.
Thus no $z$-switch can create a larger $w'$. Then by Lemma~\ref{l:switch} $w_{\ell}$ is the dominant monomial.
\end{proof}

Let $1\leq \ell_1< \ell_2 \leq n$ be two consecutive elements of the same color class, $A_1$, say.
Let $x$ be the unique element that is the intersection of blocks $B_s$ and $B_{s+1}$,
where $s \in [d]$ is given by $\ell_1\in B_s$ and $\ell_2 \in B_{s+1}$ with $\ell_1 < x< \ell_2$.
For simpler writing set $w_i=w_{\ell_i}$ and $G_i=G_{w_i}$ and $M_i=M_{w_i}$ for $i=1,2$.
$G_i$ represents a transversal $M_i$, that is, one (non-zero) entry from every row and every column of $M_i$.
The elements of these two transversals are the same for $M_1$ and $M_2$  everywhere with the possible exceptions in columns $\ell_1,x,\ell_2,\dd{z}{s}{}$, and $\dd{z}{s+1}{}$.
We distinguish three cases depending on which row is the all $1$ sequence. It could be $s$, $s+1$ or some other row.

\begin{table}[t]\label{fig:4}
	\begin{center}
		\begin{tabular}{c|c|c|c|c|}
			& $\ell_1$ & $x$&$\ell_2$ & $\dd{z}{s+1}{}$ \\
			\hline
		    s&1,2& & &  \\
			\hline
            s+1& & & 1&2   \\
			\hline
			s& &1 & 2&  \\
			\hline
			s+1& &2 & &1  \\
			\hline
		\end{tabular} \hspace{.5in}
		\begin{tabular}{c|c|c|c|c|}
	& $\ell_1$ & $x$&$\ell_2$ & $\dd{z}{s}{}$ \\
	\hline
	s&2& & & 1\\
	\hline
	s+1& & &1,2 &  \\
	\hline
	s& &1 & &2  \\
	\hline
	s+1& 1&2 & &  \\
	\hline
\end{tabular} \\ \vspace{.2in}
		\begin{tabular}{c|c|c|c|c|c|}
	& $\ell_1$ & $x$&$\ell_2$ & $\dd{z}{s}{}$ & $\dd{z}{s+1}{}$ \\
	\hline
	s&2& & & 1 & \\
	\hline
	s+1& & &1 & & 2 \\
	\hline
	s& &1 & & 2&  \\
	\hline
	s+1& & 2 & & & 1\\
	\hline
	t& 1 &  & 2& & \\
	\hline
\end{tabular}
	\end{center}
	\caption{the position of the elements of the dominant monomial in $M_{\ell_1}$ and $M_{\ell_2}$ (right)}
\end{table}

{\bf Case (i)} when row $s$ is the all $1$ sequence. In this case there is no column corresponding to $\dd{z}{s}{}$.
Thus the two transversals only differ in columns $\ell_1,x,\ell_2,\dd{z}{s+1}{}$. The first matrix in Table 3 depicts the positions of
the four columns and rows of $M_i$ where changes occur. The first two rows are rows $s,s+1$ in the group of $(d+1)$ rows of color $1$, the color class of $\ell_1$ and $\ell_2$.
The next two rows are again rows $s,s+1$ in the group rows of color $m$ where $m$ is the color class of $x$. Direct checking shows that the $1$s resp. $2$s
in the matrix give the positions of the transversals corresponding to $w_1$ and $w_2$.
One can read from the two $4\times4$ matrices that their determinants have the same sign.

As an example we explain how to check entry $1$ in row $1$ and column $1$ of the first matrix in Table 3.
As row $s$ is the all $1$ row and color $1$ (the color of $\ell_1$) is the missing color class from $R_s$, $G_1(s,1)=\ell_1$. The
corresponding entry in $M_1$ sits in column $\ell_1$ and in row $s$ which is the $s$th row in the group of rows of color $1$.
Another example is entry $2$ in the fourth row and second column. Let $m$ be the color class of $x$. As $G_2(s+1,m)=x$, the corresponding entry in $M_2$ lies
in row $s+1$ of the group of rows of color $m$ and in column $x$.

{\bf Case (ii)} when row $(s+1)$ is the all $1$ sequence. There is no column corresponding to $\dd{z}{s+1}{}$.
The second matrix in Table 3 shows the positions of the four columns and rows where there are changes.
One can read from the two matrices that their determinants have the same sign.

{\bf Case (iii)} when row $t$ is the all $1$ row and $t\notin \{s, s+1\}$.
Also the missing color in $R_t$ could be $1$ or the color of $x$ or any other color.
The row related to that color and coordinate is the same though for both $\ell_1$ and $\ell_2$.
The last matrix in Table 3 depicts the positions of the five rows and columns where changes occur.
As in Case (i) and (ii) the first two rows are rows  $s,s+1$ in the group of rows of color $1$ and the next two rows are again rows $s,s+1$ among the rows the color class of $x$.
The fifth row corresponds to the all $1$ row among the rows of the appropriate color class. Observe that the position of the
fifth row here could be either the first row, the third row or the fifth row. But either way that does not change the sign of
the determinant. Again we can see that the determinants of the two matrices have the same sign.
\end{proof}

\section{Proof of Theorem~\ref{th:univ-tverberg}, last part}\label{sec:mainproofcont}

Finally we prove the second half of Theorem~\ref{th:univ-tverberg}. We need a simple lemma.

\begin{lemma} Assume that there are consecutive elements $\ell_1$ and $\ell_2$ of a color class such that
$G_{w_1}$ and $G_{w_2}$ have their $z$ entries in the exact same position. Then the corresponding
transversals in $M_1$ and $M_2$ have different signs.
\end{lemma}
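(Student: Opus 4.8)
The claim compares the two dominant monomials $w_1 = w_{\ell_1}$ and $w_2 = w_{\ell_2}$ under the extra hypothesis that the $z$ entries occupy the same positions in $G_{w_1}$ and $G_{w_2}$. Since $w_i$ records a transversal of $M_i = M_{\ell_i}$, and $M_{\ell_1}$, $M_{\ell_2}$ differ only in which column carries the right-hand side vector $b$ (namely $\ell_1$ versus $\ell_2$), I would first pin down exactly where the two transversals differ. By the increasing-order-in-columns rule, once the $z$-entry positions are fixed, $G_{w}$ is completely determined by filling the remaining $n-1$ cells of each column in increasing order with the elements of the corresponding color class (minus the deleted index). So the only discrepancies between $G_{w_1}$ and $G_{w_2}$ occur in the column(s) affected by which of $\ell_1, \ell_2$ is removed — i.e. in the color class $A_1$ that contains both $\ell_1$ and $\ell_2$, and in the one row where that class has its $z$-entry, which by the rainbow structure (and the analysis in Section~\ref{sec:mainproof}) is tied to the block $B_s$ or $B_{s+1}$ with $\ell_1 < x < \ell_2$ as in Case (i)–(iii).

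**Key steps in order.** First I would isolate the small submatrix of $M_{\ell_i}$ on which the two transversals disagree — exactly the rows and columns highlighted in Table~3, but now with the $z$-entry rows forced to coincide, so the relevant block shrinks to essentially a $2\times 2$ (or $3\times 3$, accounting for the all-$1$ row when it lies among rows $s, s+1$) situation in columns $\ell_1, x, \ell_2$ and the single shared $z$-column. Second, I would write out the two candidate $2\times 2$ (or $3\times 3$) subtransversals explicitly: in one the factor from column $x$ sits in row $s$ and in the other it sits in row $s+1$, and correspondingly the removed column shifts between $\ell_1$ and $\ell_2$. Third — the actual content — I would compute the sign of the product of matrix entries along each of the two local transversals (remembering the $-1$'s coming from the $z$-columns and the $+1$ from the $b$-column) and observe that exchanging the roles of the two rows costs exactly one transposition, so the local permutation signs are opposite. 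Finally, since everything outside this little block is literally identical between the two transversals, the global signs of $w_1$ in $\det M_1$ and $w_2$ in $\det M_2$ differ, which is the assertion.

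**Main obstacle.** The delicate point is bookkeeping the sign of a transversal — i.e. the permutation sign $\operatorname{sgn}(\pi)$ multiplying the monomial in the Leibniz expansion — rather than just the sign of the product of entries. When the $z$-entry rows are forced equal, the two fillings differ by moving one element of $A_1$ from being "above" the $z$-row to "below" it (or vice versa), which shifts it past exactly the block of entries that previously occupied the cells between; I need to check that this net shift, together with the swap of which column ($\ell_1$ vs $\ell_2$) is the $b$-column, amounts to an \emph{odd} permutation change. The cleanest way to do this is to track the relative order of the row-indices assigned to columns $\ell_1, x, \ell_2$: in $G_{w_1}$ the triple $(\ell_1, x, \ell_2)$ lands in rows $(s, s{+}1, \text{below})$ in one cyclic arrangement and in $G_{w_2}$ in rows $(\text{above}, s, s{+}1)$ in the reverse cyclic arrangement — a single transposition apart — so the signs flip. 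I would present this via the explicit $2\times 2$ and $3\times 3$ minors, mirroring the Table~3 computation in Section~\ref{sec:mainproof} but now reading off that the two $2\times 2$ (resp. $3\times 3$) determinants have \emph{opposite} sign instead of the same sign, because the $z$-rows coincide rather than shifting.
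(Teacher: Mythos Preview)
Your plan imports machinery that does not belong to this lemma. The blocks $B_s,B_{s+1}$, the intermediate element $x$, and the Table~3 case analysis all come from Section~\ref{sec:mainproof}, where the partition is \emph{rainbow} and the $z$-entry positions genuinely shift between $G_{w_1}$ and $G_{w_2}$. The present lemma sits in Section~\ref{sec:mainproofcont}, concerns an arbitrary (in fact ultimately non-rainbow) partition, and its hypothesis is precisely that the $z$-positions do \emph{not} shift. There is no $x$ here, the class $A_1$ need not have a single $z$-entry, and your sentence ``in $G_{w_1}$ the triple $(\ell_1,x,\ell_2)$ lands in rows $(s,s{+}1,\text{below})$'' cannot be parsed: $\ell_1$ does not appear in $G_{w_1}$ at all, since it is the deleted index.

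The paper's argument is much shorter than your sketch and needs none of that apparatus. Because the $z$-entries occupy identical cells in $G_{w_1}$ and $G_{w_2}$, and because $\ell_1,\ell_2$ are consecutive in their common class $A_1$, the increasing-order-in-columns rule forces the two $G$-matrices to agree in \emph{every} cell except one: the single non-$z$ cell of column $1$ that holds $\ell_2$ in $G_{w_1}$ and $\ell_1$ in $G_{w_2}$. (Columns $m\neq 1$ are filled by the same set $A_m$ in both; column $1$ is filled by $A_1\setminus\{\ell_1\}$ versus $A_1\setminus\{\ell_2\}$, and since $\ell_1,\ell_2$ are consecutive these occupy the identical slots with $\ell_2$ replaced by $\ell_1$.) Translating back to transversals of $M_1$ and $M_2$: every column of $M$ is assigned the same row in both transversals except columns $\ell_1$ and $\ell_2$, whose row assignments are swapped --- one goes to the row holding the $+1$ of the $b$-vector, the other to the row recorded by that one changed $G$-cell. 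That is a single transposition, so the permutation signs differ; the products of entries have the same sign since all $\dd{p}{t}{i}$ are positive and the $b$-column contributes $+1$ in either case. No $3\times 3$ minor, no $x$, no case split is required. Your ``single transposition'' intuition at the end is exactly right; you just buried it under the wrong scaffolding.
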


\begin{proof} Assume both $\ell_1,\ell_2 \in A_1$ and row $s$ is the all $1$ row in $(1,p)$.  Then $G_1(s,1)=\dd{z}{s}{}$
and $G_1(m,1)=\ell_2$ for some $m$. As the $z$ coordinates are at the same positions $G_2(s,1)=\dd{z}{s}{}$. The increasing order
in columns rule implies that $G_2(m,1)=\ell_1$. So the only difference between the corresponding transversals occurs
in the $2\times2$ submatrix in columns $\ell_1$ and $\ell_2$ and in row $s$ and the one corresponding to $m$.
A simple checking shows that the corresponding transversals have different signs.
\end{proof}

\begin{theorem} Let $A_1, \dots, A_r$ be a partition that is not rainbow. Then there are
consecutive elements $\ell_1$ and $\ell_2$ of a color class such that $G_1$ and $G_2$
have their $z$ entries in the exact same position.
\end{theorem}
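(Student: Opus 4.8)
The plan is to show that if the partition $A_1,\ldots,A_r$ is not rainbow, then one can locate a block $B_s$ and a color class $A_m$ such that the following ``defect'' occurs: either $A_m$ misses $B_s$ entirely, or $A_m$ meets $B_s$ in at least two elements, while some other color class $A_{m'}$ compensates with the opposite defect in the same block. The key point is that the $z$-entries of $G_{w_\ell}$ are determined by the sets $R_1,\ldots,R_{d+1}$ (the first $r-1$, next $r-1$, \dots, last $r-1$ elements of $[n]\setminus\{\ell\}$), and which color class is ``missing'' from each $R_s$. So two indices $\ell_1,\ell_2$ of the same color class give the same $z$-positions exactly when passing from $\ell_1$ to $\ell_2$ does not change the missing-color pattern $(\mu_1,\ldots,\mu_{d+1})$, where $\mu_s$ is the color class absent from $R_s$.

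\textbf{Key steps.}
First I would make precise how the $R_s$'s, and hence the missing-color sequence $\mu$, depend on $\ell$. Removing $\ell$ from $B_s$ and regrouping, $R_s$ is $B_s$ with $\ell$ removed if $\ell\in B_s$; if $\ell$ lies in a block to the left of $B_s$ then $R_s$ is the rightmost $r-1$ elements of $B_s$ (so $\mu_s$ is the color of the leftmost element of $B_s$), and symmetrically if $\ell$ is to the right then $\mu_s$ is the color of the rightmost element of $B_s$. Second, I would observe that when the partition \emph{is} rainbow, each $B_s$ meets each color class exactly once, so as $\ell$ sweeps across $B_s$ the missing color $\mu_s$ changes at every step; that is why in the rainbow case consecutive same-color $\ell_1,\ell_2$ always produce \emph{different} $z$-patterns. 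The non-rainbow assumption is what breaks this. Third, I would argue: since the partition is not rainbow, there is a block $B_s$ in which some color class $A_\alpha$ has no element and some color class $A_\beta$ has at least two elements (this follows by a counting/pigeonhole argument on $B_s$, which has exactly $r$ elements distributed among $r$ classes). Now consider the two elements of $A_\beta$ in $B_s$ that are consecutive among the occurrences of $A_\beta$ in $[n]$—call them $\ell_1<\ell_2$; I claim that for these the missing-color sequence, and more importantly the whole $z$-placement, is unchanged. The reason is that moving the deleted index from $\ell_1$ to $\ell_2$ only affects the partition $R_1,\ldots,R_{d+1}$ \emph{within} block $B_s$ (all other blocks are untouched since $\ell_1,\ell_2\in B_s$), and within $B_s$, because $A_\beta$ has two elements there with nothing of the same color strictly between them, deleting either of $\ell_1,\ell_2$ leaves the same multiset of colors in $R_s$, hence the same missing color $\mu_s$ and the same increasing-order filling of the $z$-entries in row $s$.

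\textbf{Main obstacle.}
The delicate point is the last claim: I must check carefully that deleting $\ell_1$ versus deleting $\ell_2$ really gives \emph{identical} $G_w$ $z$-positions, not merely the same missing color $\mu_s$. One must verify that the number of $z$-entries placed in each column in rows $1,\ldots,s$ is the same in both cases—equivalently, that the recursive splitting of Section~\ref{sec:finddom} produces the same answer. Since the two deletions differ only inside $B_s$, and inside $B_s$ they remove two same-colored elements with no same-colored element between them, the induced color multisets in $R_s$ (and in all other $R_t$) coincide, so the unique splitting from Lemma~\ref{l:split} coincides at every level. Then by Lemma~\ref{l:maxdom} and the construction of the dominant filling in Section~\ref{sec:finddom}, the dominant monomials $w_1,w_2$ have their $z$-entries in exactly the same positions, which is what was to be shown. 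A secondary technical care is the placement of consecutive color-class elements: I should take $\ell_1,\ell_2$ to be consecutive \emph{in the color class $A_\beta$}, and argue that since both lie in $B_s$ (and $B_s$ contains $r$ consecutive integers while $A_\beta$ has at least two of them), such a consecutive pair inside $B_s$ indeed exists.
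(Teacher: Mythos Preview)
Your proposal has a genuine gap. The description ``$z$-entries of $G_{w_\ell}$ are determined by the sets $R_1,\ldots,R_{d+1}$ and which color class is missing from each $R_s$'' is valid \emph{only} for rainbow partitions (this is exactly what Section~\ref{sec:mainproof} establishes). For a non-rainbow partition some $R_s$ contains two elements of the same color, so the ``missing color $\mu_s$'' is not even well defined and the elements of $R_s$ cannot all be placed in row $s$. In the non-rainbow case the dominant filling is located by the recursive splitting algorithm of Lemma~\ref{l:split}, which cuts $[n]\setminus\{\ell\}$ at the $\vdash$-maximal index $\tau$, not at the blocks $B_s$, and then performs an excess-fixing and pushing procedure whose outcome depends on actual element positions. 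Your sentence ``the induced color multisets in $R_s$ coincide, so the unique splitting from Lemma~\ref{l:split} coincides at every level'' is a non-sequitur: matching color multisets on the $R_s$ partition says nothing \emph{a priori} about how the excess-fixing and pushing steps behave, nor about the sub-splittings at later levels (which use different $\vdash$-maximal indices on sub-intervals of rows, not on blocks).

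The paper's proof proceeds quite differently. It runs the algorithm of Lemma~\ref{l:split} explicitly for the top-level split at $\tau$, defines $U=\{1,\ldots,\tau(r-1)+1\}$ and $V=\{\tau(r-1)+1,\ldots,n\}$, and does a case analysis on the excesses $e(U),e(V)$. When $e(U)\ne e(V)$ or when $e(U)=e(V)>0$, it chooses $\ell_1,\ell_2$ to be the \emph{first two elements of a color class with positive excess} (not two elements lying in a common block $B_s$) and then verifies step by step that the excess-fixing and pushing phases of the algorithm produce identical $X'',Y''$ for both deletions; this forces all $z$-entries into identical positions. When there is no excess, it recurses on the sub-problem induced on $U$ or $V$. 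Your block-based choice of $\ell_1,\ell_2$ and the accompanying $R_s$ heuristic do not give control over these steps, so as written the argument does not prove the theorem.
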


\begin{proof}
We use recursion on $d$ as in Section~\ref{sec:finddom} and rely on the algorithm of Lemma~\ref{l:split}.

Let $\tau\in [d]$ be maximal with respect to $\vdash$. Let $U$ be the first $\tau(r-1)+1$ elements of $[n]$ and let
$V$ be the last $(d+1-\tau)(r-1)+1$ elements of $[n]$. Observe that $U$ and $V$ have exactly one element in common,
namely $x =\tau(r-1)+1$. Recall the notation $A_m^Z=A_m\cap Z$ where $Z \subset [n]$.

We begin with the case $d=1$. Then all color classes have 2 elements except one, the first color class say, that has exactly one element.
If $A_1=\{x\}$, then there must be a class $A_m=\{\ell_1,\ell_2\}$ with $\ell_1,\ell_2<x$ because the partition is not rainbow. One can check easily that the $z$ entries are
at the same position in $G_1$ and  $G_2$. If $A_1=\{y\}$ and $y>x$ (say), then there is a class $A_m=\{\ell_1,\ell_2\}$ with $\ell_1,\ell_2<y$.
The same checking shows that the $z$ entries are at the same positions in both cases.

We suppose now that $d>1$ and that the statement holds in all dimensions less than $d$. We distinguish several cases.

{\bf Case (i)} when $|A_m^U|\le \tau$ and  $|A_m^V|\le d+1-\tau$ for every $m \in [r]$. Then one of the two
partitions induced on $U$ and $V$ must be not rainbow as otherwise the original partition is rainbow. Assume the partition induced by $U$ is not
rainbow. Set $Y=V\setminus \{x\}$ and find the dominant filling of the matrix $G^{Y}$ consisting of the last $d+1-\tau$ rows of $G$ for the partition
$A_m^{Y}$, $m \in [r]$. The recursive algorithm of Lemma~\ref{l:split} gives the position of the $z$-coordinates in $G^Y$
independently of $U$. Then recurse on $U$, that is, on the partition $A_m^U$, $m\in [r]$ which is in dimension $\tau$ now.
We find the two elements $\ell_1, \ell_2$ in $U$ with desired property.

Assume now that we are not in Case (i). Define the excess of $U$ resp. $V$ in $A_m$ as $e(U,m)=||A_m\cap U|-\tau|_+$ and $e(V,m)=||A_m\cap V|-(d+1)-\tau)|_+$.
Set $e(U)=\sum_1^r e(U,m)$ and $e(V)=\sum_1^r e(V,m)$.

{\bf Case (ii)} when $e(U)\ne e(V)$. Assume without loss of generality that $e(U)>e(V)\ge 0$. Then $e(U,m)>0$ for  some $m \in [r]$.
We claim that the first two elements $\ell_1$ and $\ell_2$ of $A_m$ have the required property. We prove this using the algorithm in the proof of Lemma~\ref{l:split} on the sets $[n]\setminus {\ell_i}$ with $\tau$ as the  $\vdash$-maximal element of $[d]$. We define $X=X_i=U\setminus \{\ell_i\}$ for $i=1,2$ and $Y=V\setminus\{x\}$. (We will just use $X$ for $X_1$ and $X_2$ with no confusion emerging.). They have the right sizes for the algorithm, and $e(X)=e(U)-1$ because $\ell_i\in X$ and $e(Y) \leq e(V)$. Thus $e(X)\ge e(Y)$. Since $e(U,m)>\tau$, $|A_m^X|\ge \tau$ with either $\ell_1$ or $\ell_2$ missing. After the exchange we get $X',Y'$ with $|X'|=|X|-e(X)+e(Y)$ and $|Y'|=|Y|-e(Y)+e(X)$, so $|X'|\le |Y'|$. If they are equal, then the algorithm is over and $A_m^X=A_m^{X'}$. Thus $|A_m^{X'}|=\tau$ meaning that no $z$ entry will appear in the first $\tau$ rows of column $m$. During the recursion, the algorithm does not see the difference between whether $\ell_1$ or $\ell_2$ is in the first position of column $m$. So the $z$ entries go to the same position in both cases.

The case $|X'|<|Y'|$ is similar. Then we have to push up some pushable elements from $Y'$ to $X'$ but $A_m^{X'}$ will not change as there is no room to push anybody there. Thus $A_m^X=A_m^{X''}$ and the previous argument works. So all the $z$ entries have to be in the same positions in $G_1$ and $G_2$.

{\bf Case (iii)} when $e(U)=e(V)>0$. We can assume without loss of generality that  $x=\tau(r-1)+1$ is in the first color class of, that is, $x \in A_1$. We say that $x$ is in excess if
either $|(U\setminus \{x\})\cap A_1|\ge \tau$ or $|(V\setminus \{x\})\cap A_1|\ge d+1-\tau$. We distinguish two sub-cases depending on whether $x$ is in excess or not.

{\bf Case (iii-a)} when $x$ is in excess, $|(V\setminus \{x\})\cap A_1|\ge d+1-\tau$, say. Then one can't have  $|(U\setminus \{x\})\cap A_1|\ge \tau$ as well
since that would imply $|A_1|>d+1$. Thus there is a column $m\ne 1$ such that $e(U,m)>0$. We claim again that the two smallest elements, $\ell_1$ and $\ell_2$, of $A_m$
have the required property. The proof is almost identical to that of Case (ii). Define $X=U\setminus \{\ell_i\}$ and $Y=V\setminus \{x\}$.
Here $e(X)=e(U)-1$ and $e(Y)=e(V)-1$ so $e(X)=e(Y)\ge 0$. If $e(X)=e(Y)=0$ then the algorithm of Lemma~\ref{l:split} stops with the pair $X,Y$, and $|A_m^X|=\tau$.
The argument used in Case (ii) works. If $e(X)=e(Y)>0$, then after the exchange the algorithm stops with the pair $X',Y'$. Again $A_m^X=A_m^{X'}$ and we are finished the same way as in Case (ii).

{\bf Case (iii-b)} when $x$ is not in excess. This implies that $e(U,1)=e(V,1) = 0$ and so $|A_1|\le d$. As $e(U)>0$ there is some $m\ne 1$
with $e(U,m)>0$. Let $\ell_1$ and $\ell_2$ be the two smallest elements of $A_m$. It follows that both $\ell_i \in U$.
We claim again that $\ell_1$ and $\ell_2$ have the required property. Set again $X=U\setminus \{\ell_i\}$ and $Y=V\setminus \{x\}$.
Then $e(X)=e(U)-1$ and $e(Y)=e(V)$ so $e(Y)-e(X)=1$. 
Since $x$ is pushable and the smallest element of $Y$, therefore it is the only element that is pushed.
The argument is finished the same way as in Case (iii-a).
\end{proof}

We remark finally that the proof of Theorem~\ref{th:univtverberg} follows from that of Theorem~\ref{th:univ-tverberg}. Just the super-dominant $q$-increasing subsequence of $a:[N]\to \rr^d$, whose existence is guaranteed by the results in Section~\ref{sec:orderdominant}, has to be of size $m$ instead of $n$. That can be achieved using Ramsey theory again with a suitably larger $N$.

\section{Acknowledgment} This research was supported by ERC Advanced Research grant 267165 (DISCONV).

{\sc Attila P\'or}\\[-1.5mm]
{\footnotesize Department of Mathematics}\\[-1.5mm]
{\footnotesize Western Kentucky University}\\[-1.5mm]
{\footnotesize  1906 College Heights Blvd. \#11078}\\[-1.5mm]
{\footnotesize   Bowling Green, KY 42101, USA}
\\[-1.5mm]   {\footnotesize e-mail: {\tt  attila.por@wku.edu}}

\end{document}